\newtheorem{theorem}{Theorem}[section]
\newtheorem{lemma}{Lemma}[section]
\newtheorem{remark}{Remark}[section]
\newtheorem*{thma}{Theorem A}
\newtheorem*{thmb}{Theorem B}
\newtheorem*{thmc}{Theorem C}
\newtheorem*{propa}{Proposition A}
\begin{document}
\title{On Markushevich bases $\{x^{\lambda_n}\}_{n=1}^{\infty}$ for their closed span in weighted $L^2 (A)$ spaces
over sets $A\subset [0,\infty)$ of positive Lebesgue measure, Hereditary completeness, and Moment problems}
\author{Elias Zikkos\\
Department of Mathematics, Khalifa University of Science and Technology,\\
Abu Dhabi, United Arab Emirates\\
email address:  elias.zikkos@ku.ac.ae and eliaszikkos@yahoo.com}

\maketitle

\begin{abstract}

Inspired by the work of Borwein and Erdelyi \cite{BE1997JAMS} on generalizations of M\"{u}ntz's theorem,
we investigate the properties of the system $\{x^{\lambda_n}\}_{n=1}^{\infty}$ in weighted $L^p (A)$ spaces, for $p\ge 1$,
denoted by $L^p_w (A)$, where

(I) $A$ is a measurable subset of the real half-line $[0,\infty)$ having positive Lebesgue measure,

(II) $w$ is a non-negative integrable function defined on $A$, and

(III) $\{\lambda_n\}_{n=1}^{\infty}$ is a strictly increasing sequence of positive real numbers such that

$\inf\{\lambda_{n+1}-\lambda_n \}>0$ and $\sum_{n=1}^{\infty}\lambda_n^{-1}<\infty$.

Based on the  $\bf{Remez-type\,\, inequality}$ (\cite[Theorem 5.6]{BE1997JAMS}),
we find a sharp lower bound for the $distance$ between the function $x^{\lambda_n}$ and the closed span
of the system  $\{x^{\lambda_k}\}_{k\not= n}$ in the $L^p_w (A)$ spaces. Then intrigued by the
``Clarkson-Erd\H{o}s-Schwartz Phenomenon'' regarding the closed span of an incomplete system $\{x^{\lambda_n}\}_{n=1}^{\infty}$
in the $L^p_w (A)$ spaces (\cite[Theorem 6.4]{BE1997JAMS}), we prove that a function $f$ in
$\overline{\text{span}}\{x^{\lambda_n}\}_{n=1}^{\infty}$ in the Hilbert space $L^2_w (A)$, admits the $\bf{Fourier-type}$ series representation
$f(x)=\sum_{n=1}^{\infty} \langle f, r_n\rangle_{w,A} x^{\lambda_n}$ a.e on $A$, where $\{r_n\}_{n=1}^{\infty}$
is the unique biorthogonal family of $\{x^{\lambda_n}\}_{n=1}^{\infty}$ in $\overline{\text{span}}\{x^{\lambda_n}\}_{n=1}^{\infty}$
in $L^2_w (A)$. As a result, we show that the system $\{x^{\lambda_n}\}_{n=1}^{\infty}$ is a $\bf{Markushevich\,\, basis}$
for $\overline{\text{span}}\{x^{\lambda_n}\}_{n=1}^{\infty}$ in $L^2_w (A)$.
Furthermore, we consider a $\bf{moment\,\, problem}$: assuming certain growth conditions on a sequence
$\{d_n\}_{n=1}^{\infty}$ of real numbers, we find a function $f\in\overline{\text{span}}\{x^{\lambda_n}\}_{n=1}^{\infty}$
in $L^2_w (A)$, serving as a solution to
\[
\int_{A} f(x)\cdot  x^{\lambda_n}\cdot w(x)\, dx=d_n, \qquad n=1,2,\cdots
\]
Finally, if $m\le w(x)\le M$ on $A$ for some positive numbers $m$ and $M$ and the set $A$ contains an interval $[a, r_A]$,
where $a\ge 0$ and $r_A$ is the essential supremum of $A$, we prove that the system
$\{x^{\lambda_n}\}_{n=1}^{\infty}$ is $\bf{hereditarily\,\, complete}$ in $\overline{\text{span}}\{x^{\lambda_n}\}_{n=1}^{\infty}$
in the space $L^2_w(A)$. As a result, a general class of compact operators on the closure is constructed that admit spectral synthesis.
\end{abstract}

Keywords: M\"{u}ntz-Sz\'{a}sz theorem, Distances, Closed Span, Biorthogonal Families, Markushevich bases, Hereditary completeness,
Moment Problems.

AMS 2010 Mathematics Subject Classification. 30B60, 30B50, 46E15, 46E20, 46B20, 41A30, 34A35.

\section{Introduction and the Main Results}
\setcounter{equation}{0}

This work has been motivated from the Borwein-Erdelyi results \cite{BE1997JAMS} on generalizations
of the classical M\"{u}ntz-Sz\'{a}sz theorem which in turn answered a question posed by S. N. Bernstein
regarding the completeness of systems $\{x^{\lambda_n}\}_{n=1}^{\infty}$ in the spaces $C_0[0,1]$ and $L^p (0,1)$ for $p\ge 1$,
when $\Lambda=\{\lambda_n\}_{n=1}^{\infty}$ is a strictly increasing sequence of positive real numbers diverging to infinity.
By the M\"{u}ntz-Sz\'{a}sz theorem the span of the system $\{x^{\lambda_n}\}_{n=1}^{\infty}$ is dense in the spaces $C_0[0,1]$ and $L^p (0,1)$
for $p\ge 1$,  if and only $\sum_{n=1}^{\infty}\lambda_n^{-1}=\infty$.

Contributions to the M\"{u}ntz-Sz\'{a}sz problem were given later on
by J. A. Clarkson, P. Erd\H{o}s, L. Schwartz,
W. A. J. Luxemburg, J. Korevaar, P. Borwein,  T. Erdelyi, W. B. Johnson (see \cite{CE,LK,BE,BE1997JAMS})
where the interval $[0,1]$ is replaced by an interval $[a,b]$ away from the origin for $a>0$
or even by a compact set $K\subset [0,\infty)$  of positive Lebesgue measure. The reader may consult
relevant survey articles and books such as \cite{Pinkus,Almira,BE,Gurariy}. There is still an ongoing research
on M\"{u}ntz-Sz\'{a}sz type problems
(see \cite{Agler1,Agler2,Chalendar,Noor,Ait-Haddou,Lefevre1,Lefevre2,Lefevre3,Fricain,Jaming,Trefethen,Zikkos2011JAT,Zikkos2024JMAA,Zikkos2025Ideal}).

When $\sum_{n=1}^{\infty}\lambda_n^{-1}<\infty$ the closed span of the system $\{x^{\lambda_n}\}_{n=1}^{\infty}$
is a proper subspace of $L^p (0,1)$. In this case we have the ``Clarkson-Erd\H{o}s-Schwartz Phenomenon'':
any function $f$ $\overline{\text{span}}\{x^{\lambda_n}\}_{n=1}^{\infty}$ in $L^p (0,1)$, is extended to an analytic function
throughout the interior of the slit disk
$\mathbb{D}^*:= \{z:\, |z|<1\}\setminus (-1,0)$ admitting a series representation
of the form $f(z)=\sum_{n=0}^{\infty} a_n z^{\lambda_n}$ which converges uniformly on compacta
(see \cite{CE}, \cite[Corollary 6.2.4]{Gurariy} and \cite[Theorem 8.2]{LK}).
A converse result appears in \cite{Korevaar1947}, \cite[Corollary 6.2.4]{Gurariy} and \cite[Theorem 8.2]{LK}.
We also note that in \cite{Dzh,Zikkos2011JAT,Zikkos2025Ideal} one finds results on the closed span in
$L^2 (-\infty, 0)$ and in $L^2 (a,b)$ of exponential systems of the form
\[
\{x^ke^{\lambda_n x}:\,\, n\in\mathbb{N}, \,\, k=0,1,2,\dots,\mu_n-1\}.
\]

A beautiful generalization of the ``Clarkson-Erd\H{o}s-Schwartz Phenomenon'' was given by Borwein and Erdelyi in
\cite[Theorems 6.1 and 6.4]{BE1997JAMS} where the interval $(0,1)$ is replaced by a $\bf{measurable}$ subset
of the real half-line $[0,\infty)$ having positive Lebesgue measure.
The crucial tool was a Remez-type inequality \cite[Theorems 5.1 and 5.6]{BE1997JAMS}
which is recalled below with several other results form \cite{BE1997JAMS} and right after we state ours.
But first we need to introduce some notation and definitions following the work of \cite{BE1997JAMS}.

\subsection{Notations and definitions}

Let $A\subset [0,\infty)$  be a measurable subset of the real half-line $[0,\infty)$ having positive Lebesgue measure
and let $r_A$ be the essential supremum of $A$, that is
\begin{equation}\label{essentialsup}
r_{A}:=\sup\{x\in [0,\infty): m(A\cap [x, \infty)>0\},\qquad \text{where $m$ is the Lebesgue measure}.
\end{equation}

Let $w$ be a real-valued non-negative integrable function defined on $A$, with $\int_{A} w(x)\, dx<\infty$, and such that
\begin{equation}\label{measurezero}
m(\{x\in A : w(x)=0\})=0.
\end{equation}
Let
\begin{equation}\label{rw}
r_{w}:=\sup\left\{x\in [0,\infty):\int_{A\cap(x,\infty)}w(t)\, dt>0\right\}.
\end{equation}
\begin{remark}\label{equalr}
Clearly, it always holds that $r_w\le r_A$. Also, it is easy to see that
if there is a positive number $m$ so that $m\le w(x)$ for all $x\in A$, then $r_w=r_A$.
\end{remark}
We also consider the slit disk
\begin{equation}\label{slit}
D_{r_w}:=\{z\in\mathbb{C}\setminus (-\infty, 0]: \,\, |z|<r_w\}.
\end{equation}

For $p\ge 1$, we denote by $L^p_w (A)$ the space of
$\bf real-valued$ measurable functions defined on $A$ such that $\int_{A}|f(x)|^p\cdot w(x)\, dx<\infty$, equipped with the norm
\[
||f||_{L^p_w (A)}:=\left(\int_{A}|f(x)|^p\cdot w(x)\, dx\right)^{\frac{1}{p}}.
\]
The space $L^2_w(A)$ is a real Hilbert space once endowed with the inner product
\[
\langle f,g \rangle_{w,A}:=\int_{A} f(x) g(x)\cdot w(x)\, dx.
\]
If $w\equiv 1$, then $L^p (A)$ is the space of functions such that $\int_{A}|f(x)|^p\, dx<\infty$ and we let
\[
||f||_{L^p(A)}:=\left(\int_{A}|f(x)|^p\, dx\right)^{\frac{1}{p}}\qquad\text{and}\qquad
\langle f,g \rangle_{A}:=\int_{A} f(x) g(x)\, dx.
\]
Given a weight $w$, if $A$ is an interval $[a,b]$ we use the notations
\[
||f||_{L^p_w ([a,b])}:=\left(\int_{a}^{b}|f(x)|^p\cdot w(x)\, dx\right)^{\frac{1}{p}}\qquad\text{and}\qquad
\langle f,g \rangle_{w,[a,b]}:=\int_{a}^{b} f(x) g(x)\cdot w(x)\, dx,
\]
and in addition, if $w\equiv 1$, we use the notations
\[
||f||_{L^p([a,b])}:=\left(\int_{a}^{b}|f(x)|^p\, dx\right)^{\frac{1}{p}}\qquad\text{and}\qquad
\langle f,g \rangle_{[a,b]}:=\int_{a}^{b} f(x) g(x)\, dx.
\]

\begin{remark}
We also use the notations $||f||_{A} : =\sup_{x\in A} |f(x)|$ and $||f||_{[a,b]} : = \sup_{x\in [a,b]} |f(x)|$.
\end{remark}

Throughout this article, in our results $\Lambda:=\{\lambda_n\}_{n=1}^{\infty}$ is
a strictly increasing sequence of positive real numbers so that
\begin{equation}\label{LKcondition}
\sum_{n=1}^{\infty}\frac{1}{\lambda_n}<\infty
\end{equation}
satisfying the gap condition
\begin{equation}\label{gapcondition}
\inf_{n\in\mathbb{N}}(\lambda_{n+1}-\lambda_n)>0.
\end{equation}
We associate to $\Lambda$ the system
\begin{equation}\label{system}
M_{\Lambda}:=\{e_{n}:\,\, n\in\mathbb{N}\}\qquad \text{where}\qquad e_{n}(x):=x^{\lambda_n}.
\end{equation}
\begin{remark}

\quad

(1) We denote by $\text{span}(M_{\Lambda})$ the set of all
finite linear combinations of elements from $M_{\Lambda}$ with $\bf real$ coefficients.

(2) We say that a function $f:A\mapsto \mathbb{R}$ belongs to the closed span of the system $M_{\Lambda}$ in $L^p_w(A)$,
if for every $\epsilon>0$ there is some $g_{\epsilon}\in \text{span}(M_{\Lambda})$ so that $||f-g_{\epsilon}||_{L^p_w(A)}<\epsilon$.
\end{remark}

\subsection{Borwein-Erdelyi results}

We state below several important results from \cite{BE1997JAMS} on Remez-type inequalities, extensions of the ``Clarkson-Erd\H{o}s-Schwartz Phenomenon'', and M\"{u}ntz-Sz\'{a}sz type theorems.

\subsubsection{$\bf{Remez-type\,\, inequalities}$}

Inequality $(\ref{Erdelyic})$ is considered by Borwein and Erdelyi to be the central result of their work while
inequality $(\ref{Erdelyi})$ is the $L^p$ version.

\begin{thma}\cite[Theorems 5.1 and 5.6]{BE1997JAMS}
Suppose a sequence $\Lambda$ satisfies the condition $(\ref{LKcondition})$.
Let $s>0$ and $p\in (0,\infty)$. Then the following are true.

(I) There exists a constant $c$ depending only on $\Lambda$ and $s$,
(and not on $\rho$, $A$, or the ``length'' of $f$) so that
\begin{equation}\label{Erdelyic}
||f||_{[0,\rho]}\le c\cdot||f||_{A},
\end{equation}
for every $f\in \text{span}(M_{\Lambda})$ and for every set $A\subset [\rho, 1]$ of Lebesgue measure at least $s$.

(II) There exists a constant $c$ depending only on $\Lambda$, $s$ and $p$,
(and not on $\rho$, $A$, or the ``length'' of $f$) so that
\begin{equation}\label{Erdelyi}
||f||_{[0,\rho]}\le c\cdot||f||_{L^p(A)},
\end{equation}
for every $f\in \text{span}(M_{\Lambda})$ and for every set $A\subset [\rho, 1]$ of Lebesgue measure at least $s$.
\end{thma}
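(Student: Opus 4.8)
The plan is to reduce (II) to (I) by a soft measure-theoretic step, and then to obtain (I) by combining a Remez-type rearrangement for Müntz polynomials with the \emph{length-independent bounded Chebyshev-type inequality} for $\text{span}(M_{\Lambda})$; the latter, which is exactly where the hypothesis $\sum_{n}\lambda_n^{-1}<\infty$ is consumed, is the heart of the matter.

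\emph{Step 1: (II) follows from (I).} Fix $A\subset[\rho,1]$ with $m(A)\ge s$ and $0\ne f\in\text{span}(M_{\Lambda})$; since a nonzero Müntz polynomial is real-analytic on $(0,\infty)$ we have $||f||_{[0,\rho]}>0$, so normalize $||f||_{[0,\rho]}=1$. Let $c_{1}:=c(\Lambda,s/2)$ be the constant provided by (I) for the measure parameter $s/2$, and set $E:=\{x\in A:\ |f(x)|\ge\frac{1}{2c_{1}}\}$. If $m(E)<s/2$ then $A':=A\setminus E\subset[\rho,1]$ has $m(A')>m(A)-\frac{s}{2}\ge\frac{s}{2}$, so (I) applied to $A'$ forces $1=||f||_{[0,\rho]}\le c_{1}||f||_{A'}\le c_{1}\cdot\frac{1}{2c_{1}}=\frac{1}{2}$, a contradiction. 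Hence $m(E)\ge s/2$, so $||f||_{L^{p}(A)}^{p}\ge(2c_{1})^{-p}\cdot\frac{s}{2}$; this is (II) with $c=2c_{1}(2/s)^{1/p}$, depending only on $\Lambda,s,p$.

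\emph{Step 2: proof of (I).} Since $m(A)\ge s$ forces $1-\rho\ge s$, it is enough to treat $\rho\in(0,1-s]$; rescaling by $x\mapsto x/\operatorname{ess\,sup}A$ (which preserves $\text{span}(M_{\Lambda})$, as $(x/\tau)^{\lambda_{n}}=\tau^{-\lambda_{n}}x^{\lambda_{n}}$) one may also assume $\operatorname{ess\,sup}A=1$, so $A$ has points arbitrarily close to $1$. Two ingredients are then needed, to be interleaved carefully. (a) \emph{Remez-type rearrangement:} a Müntz polynomial of length $\ell$ has at most $\ell-1$ positive zeros, so $|f|$ has a bounded number of monotone arches; a mass-shifting argument in the spirit of the classical Remez inequality then converts ``$f$ small on $A$'' into ``$f$ small on a subinterval $J\subset[\rho,1]$ with $|J|\ge\delta(s)$''. (b) \emph{Bounded Chebyshev-type inequality:} for $0<a<b\le1$ there is a constant $K(\Lambda,a,b)$, \textbf{independent of the length of} $f$, with $||f||_{[0,a]}\le K(\Lambda,a,b)||f||_{[a,b]}$ for all $f\in\text{span}(M_{\Lambda})$, and $K$ stays bounded as $a\to0^{+}$. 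Applying (b) to move control from $[0,\rho]$ to an interval abutting $A$, and (a) to pass from that interval to $A$, yields $||f||_{[0,\rho]}\le c(\Lambda,s)||f||_{A}$ uniformly in $\rho$; the boundedness of $K(\Lambda,a,b)$ as $a\to0^{+}$ (equivalently $\rho\,||f'||_{[0,\rho]}\to0$, via a bounded Bernstein-type inequality) secures the uniformity as $\rho\to0$.

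\emph{Main obstacle.} The whole theorem stands on the length-independent inequality (b): a constant uniform over Müntz polynomials of arbitrarily large length is precisely the phenomenon created by $\sum_{n}\lambda_n^{-1}<\infty$, and this is far deeper than the surrounding measure-theoretic and rearrangement arguments. I would establish (b) either by continuing each $f\in\text{span}(M_{\Lambda})$ analytically to the slit disk $\mathbb{D}^{*}$ with a quantitative Clarkson--Erd\H{o}s--Schwartz bound $||f||_{K'}\le c(\Lambda,b)||f||_{[0,b]}$ on compacta $K'\subset\mathbb{D}^{*}$ — the Blaschke-type summability $\sum_{n}\lambda_n^{-1}<\infty$ making such a continuation and bound possible — or directly via a bounded Bernstein-type inequality $||f'||_{[0,a]}\le c(\Lambda,a,b)||f||_{[0,b]}$ combined with $f(0)=0$. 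A second, milder difficulty is the correct interleaving of (a) and (b), so that the $\ell$-dependence introduced in (a) is absorbed at the step (b) rather than accumulated.
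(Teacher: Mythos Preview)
The paper does not prove Theorem~A at all: it is quoted verbatim from \cite[Theorems 5.1 and 5.6]{BE1997JAMS} as a known input, and is then \emph{used} (not re-derived) in Sections~2 and~3. There is therefore no ``paper's own proof'' to compare your proposal against.

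As to your sketch itself: Step~1 (the level-set argument deducing (II) from (I)) is clean and correct, and is in fact the standard reduction. Step~2, however, is a plan rather than a proof. You correctly identify that the length-independent Chebyshev/Bernstein-type bound (your ingredient (b)) is the deep part, and that it is exactly where $\sum\lambda_n^{-1}<\infty$ enters; but you do not actually establish it, and the ``interleaving'' of (a) and (b) you allude to is left vague. In Borwein--Erd\'elyi the argument proceeds through sharp pointwise comparison with the extremal (M\"untz--Chebyshev) polynomials on $[a,b]$, whose explicit product structure lets one read off length-independent bounds directly from the Blaschke-type convergence $\sum\lambda_n^{-1}<\infty$; the Remez step then compares the extremal polynomial on $A$ with the extremal polynomial on an interval of the same measure. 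Your two proposed routes to (b) --- quantitative Clarkson--Erd\H{o}s--Schwartz analytic continuation, or a bounded Bernstein inequality --- are both viable in principle, but each is itself a substantial theorem at the same depth as what you are trying to prove, so invoking them without proof leaves the main gap open.
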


\subsubsection{On the ``Clarkson-Erd\H{o}s-Schwartz Phenomenon''}

Based on Theorem $A$, Borwein and Erdelyi obtained the following
result for the closed span of an incomplete system $M_{\Lambda}$ in the $L^p_w (A)$ spaces.
Apart from condition $(\ref{LKcondition})$, the gap condition $(\ref{gapcondition})$ is also imposed this time.

\begin{thmb}\cite[Theorem 6.4]{BE1997JAMS}.
Let $A$  be a measurable subset of the real half-line $[0,\infty)$ of positive Lebesgue measure
and let $w$ be a non-negative integrable function defined on $A$, with $r_w$ as in $(\ref{rw})$.
Let $\Lambda=\{\lambda_n\}_{n=1}^{\infty}$ be a sequence of distinct positive real numbers that
satisfies $(\ref{LKcondition})-(\ref{gapcondition})$. Then the span of $\{x^{\lambda_n}\}_{n=1}^{\infty}$ is not dense in $L^p_w (A)$,
and every function $f\in L^p_w (A)$ belonging to the $L^p_w (A)$ closure of $\text{span}(M_{\Lambda})$ extends analytically
in the slit disk $D_{r_w}$ $(\ref{slit})$ and can be represented as
\begin{equation}\label{coeff}
f(x)=\sum_{n=1}^{\infty} a_n x^{\lambda_n}, \qquad x\in A\cap [0, r_w).
\end{equation}
\end{thmb}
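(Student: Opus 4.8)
The plan is to derive everything from the $L^p$ Remez-type inequality Theorem~A(II), a scaling argument, and the classical Clarkson--Erd\H{o}s estimate for the biorthogonal system of $\{x^{\lambda_n}\}$ in $L^2[0,1]$. \emph{Step~1: reduce the weighted norm on $A$ to an unweighted sup-norm on $[0,\rho]$.} Fix $\rho\in(0,r_w)$. Since $w>0$ a.e.\ on $A$ by (\ref{measurezero}) and $\int_{A\cap(\rho,\infty)}w>0$ by the definition (\ref{rw}) of $r_w$, we may choose $T\in(\rho,\infty)$, a set $E\subseteq A\cap(\rho,T)$ of positive Lebesgue measure, and $\epsilon_0>0$ with $w\ge\epsilon_0$ on $E$. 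Applying Theorem~A(II) to $g(T\,\cdot\,)$ on the rescaled set $E/T\subset[\rho/T,1]$ (of measure $m(E)/T>0$) and then using $w\ge\epsilon_0$ on $E$, we obtain a constant $C_\rho$ depending only on $\Lambda,p,\rho,E,\epsilon_0$ with
\begin{equation*}
\|g\|_{[0,\rho]}\le C_\rho\,\|g\|_{L^p_w(A)}\qquad\text{for all }g\in\text{span}(M_\Lambda).
\end{equation*}

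\emph{Step~2: from an approximating sequence to convergent coefficients.} Let $f$ belong to the $L^p_w(A)$-closure of $\text{span}(M_\Lambda)$, pick $g_k\in\text{span}(M_\Lambda)$ with $g_k\to f$ in $L^p_w(A)$, and write $g_k=\sum_n b_n^{(k)}x^{\lambda_n}$. By Step~1 the sequence $(g_k)$ is Cauchy in $C[0,\rho]$ for each $\rho<r_w$, so a diagonal argument over $\rho_j\uparrow r_w$ yields $F\in C[0,r_w)$ with $g_k\to F$ uniformly on every $[0,\rho]$; passing to an a.e.-convergent subsequence in $L^p_w(A)$ and using $A\subseteq[0,r_w)$ up to a null set (because $w>0$ a.e.\ and $\int_{A\cap(r_w,\infty)}w=0$) gives $F=f$ a.e.\ on $A$. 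The decisive ingredient now is the Clarkson--Erd\H{o}s estimate: under (\ref{LKcondition})--(\ref{gapcondition}) there are $\gamma_n>0$ depending only on $\Lambda$ with $|b_n|\le\gamma_n\|g\|_{[0,1]}$ for every $g=\sum_k b_kx^{\lambda_k}\in\text{span}(M_\Lambda)$, and with $\limsup_n\gamma_n^{1/\lambda_n}\le1$, hence $\sum_n\gamma_n r^{\lambda_n}<\infty$ for $r\in[0,1)$. (One may take $\gamma_n$ comparable, up to polynomial factors, to $\prod_{k\neq n}\bigl|(\lambda_n+\lambda_k+1)/(\lambda_n-\lambda_k)\bigr|$, the norm of the $L^2[0,1]$ functional biorthogonal to $x^{\lambda_n}$; the bound $\log\gamma_n=o(\lambda_n)$ follows from $\sum 1/\lambda_n<\infty$, which forces $\lambda_n/n\to\infty$ and $\#\{k:\lambda_k\in(t,3t)\}=o(t)$, together with the gap condition.) Rescaling $[0,\rho]$ to $[0,1]$ turns this into $|b_n|\le\gamma_n\rho^{-\lambda_n}\|g\|_{[0,\rho]}$; applied to $g_k-g_j$ it shows each $(b_n^{(k)})_k$ is Cauchy with limit $a_n$ satisfying $|a_n|\le\gamma_n\rho^{-\lambda_n}\sup_k\|g_k\|_{[0,\rho]}$ for every $\rho<r_w$, and $|b_n^{(k)}-a_n|\le\gamma_n\rho^{-\lambda_n}\|g_k-F\|_{[0,\rho]}$.

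\emph{Step~3: analytic continuation, the series, and non-density.} Each $g_k$ extends analytically to $\mathbb{C}\setminus(-\infty,0]$ via the principal branch of $z^{\lambda_n}$. Fix $R<r_w$ and $\rho\in(R,r_w)$; since $\sum_n\gamma_n(R/\rho)^{\lambda_n}<\infty$ the Weierstrass $M$-test gives $\sup_{|z|\le R,\ z\in D_{r_w}}|g_k(z)-F(z)|\le C(R,\rho)\,\|g_k-F\|_{[0,\rho]}\to0$, where $F(z):=\sum_n a_nz^{\lambda_n}$ converges on $D_{r_w}$. Thus $F$ is analytic on $D_{r_w}$, agrees with $f$ a.e.\ on $A$, and (\ref{coeff}) holds with these $a_n$. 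For non-density, choose $A_1\subseteq A$ with $m(A_1)>0$ and $m(A\setminus A_1)>0$: if $\chi_{A_1}$ lay in the closure, by the above it would equal a.e.\ on $A$ a function $F$ analytic on $D_{r_w}$ and vanishing on the positive-measure set $A\setminus A_1\subseteq(0,r_w)$, so $F\equiv0$ by the identity theorem, contradicting $\chi_{A_1}\neq 0$ on $A_1$; hence $\text{span}(M_\Lambda)$ is not dense in $L^p_w(A)$.

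\textbf{Main obstacle.} Everything beyond bookkeeping hinges on the coefficient estimate $|b_n|\le\gamma_n\|g\|_{[0,1]}$ with $\limsup_n\gamma_n^{1/\lambda_n}\le1$ --- equivalently, that point evaluation at $z\in D_{r_w}$ is a bounded functional on $\bigl(\text{span}(M_\Lambda),\|\cdot\|_{L^p_w(A)}\bigr)$, locally uniformly in $z$. This is the Clarkson--Erd\H{o}s--Schwartz input and is exactly where both the summability condition (\ref{LKcondition}) and the gap condition (\ref{gapcondition}) are genuinely needed; the complementary difficulty, transferring norm control from the set $A$ to the interval $[0,\rho]$, is precisely what Theorem~A(II) supplies after the scaling reduction of Step~1.
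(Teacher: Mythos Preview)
Your proof is correct and follows essentially the same strategy as the paper's treatment. Note that in the paper Theorem~B is simply cited from \cite{BE1997JAMS}; the closest the paper comes to a proof is in Section~3.1, where Lemma~\ref{revisiting} (a refinement of Theorem~B) is proved. That argument proceeds exactly as yours does: use the Remez-type inequality (Theorem~A(II)) on a set $B_\epsilon\subset A\cap(\delta_\epsilon,\infty)$ where $w\ge\alpha_\epsilon>0$ to pass from the $L^p_w(A)$ norm to the sup-norm on $[0,\delta_\epsilon]$, deduce that the approximating $P_j$ are uniformly Cauchy there, extract coefficient limits via a distance/coefficient estimate, and assemble the series. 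The only notable difference is the source of the coefficient bound: the paper invokes the Luxemburg--Korevaar lower bound $\inf_{h\in\overline{\text{span}}(M_{\Lambda_n})}\|e_n-h\|_{L^p[0,\delta_\epsilon]}\ge M_\eta(\delta_\epsilon-\eta)^{\lambda_n}$ from \cite[Theorem~7.1]{LK}, whereas you use the Clarkson--Erd\H{o}s biorthogonal-norm estimate $|b_n|\le\gamma_n\|g\|_{[0,1]}$ with $\limsup\gamma_n^{1/\lambda_n}\le1$. These are interchangeable inputs yielding the same $|a_n|\le C_\eta(\rho-\eta)^{-\lambda_n}$ conclusion, so the two arguments are in substance the same.
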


\subsubsection{M\"{u}ntz-Sz\'{a}sz results on $L^p_w(A)$}

From Theorem $B$, Borwein and Erdelyi extended the M\"{u}ntz-Sz\'{a}sz theorem as follows.
\begin{thmc}\cite[Theorem 6.5]{BE1997JAMS}.
Let $A$  be a measurable subset of the real half-line $[0,\infty)$ of positive Lebesgue measure.
Let $\Lambda=\{\lambda_n\}_{n=1}^{\infty}$ be a strictly increasing sequence of positive real numbers, diverging to infinity.
Let $w$ be a non-negative integrable function defined on $A$. Then 
span of $\{x^{\lambda_n}\}_{n=1}^{\infty}$ is dense in $L^p_w (A)$ if and only if $\sum_{n=1}^{\infty}\lambda_n^{-1}=\infty$.
\end{thmc}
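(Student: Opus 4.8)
The plan is to prove the two implications separately, working throughout on a bounded set: discarding the Lebesgue-null set $A\cap(r_A,\infty)$ and rescaling the variable we may assume $A\subseteq[0,1]$ (taking $r_A<\infty$, as is standard in this circle of results), whence every $x^{\lambda_n}$ and every $x^{\mu}$ with $\mu>0$ already lies in $L^p_w(A)$ because $w$ is integrable. For the sufficiency ($\sum\lambda_n^{-1}=\infty\Rightarrow$ density) I would reduce to the classical M\"{u}ntz-Sz\'{a}sz theorem through the uniform norm. Continuous functions are dense in $L^p_w(A)$ (the measure $w\,dx$ is finite on the bounded set $A$), so it suffices to approximate the restriction to $A$ of an arbitrary $\phi\in C[0,1]$. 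Replacing $\phi$ by the function $\phi_\delta$ that coincides with $\phi$ on $[\delta,1]$ and interpolates linearly from $0$ to $\phi(\delta)$ on $[0,\delta]$, one gets $||\phi-\phi_\delta||_{L^p_w(A)}\le 2||\phi||_{[0,1]}\big(\int_{A\cap[0,\delta]}w\big)^{1/p}\to 0$ as $\delta\to0^+$ by absolute continuity of the integral, so it suffices to approximate $\phi_\delta\in C_0[0,1]$; and since $\sum\lambda_n^{-1}=\infty$ makes $\text{span}(M_\Lambda)$ dense in $C_0[0,1]$, while uniform approximation on $[0,1]\supseteq A$ passes to $L^p_w(A)$ because $||h||_{L^p_w(A)}\le||h||_{[0,1]}\,||w||_{L^1(A)}^{1/p}$, density of $\text{span}(M_\Lambda)$ in $L^p_w(A)$ follows.

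For the necessity ($\sum\lambda_n^{-1}<\infty\Rightarrow$ no density) I would argue by contradiction from assumed density, using the $L^p$ Remez-type inequality $(\ref{Erdelyi})$ of Theorem~A. First I would fix a measurable $A'\subseteq A$ with $m(A')=:s>0$, contained in some $[\rho,1]$ with $\rho>0$, on which $w\ge\delta>0$; this is possible since $0<w<\infty$ a.e.\ on $A$ and $m(A\cap[\rho,1])\to m(A)>0$ as $\rho\to0^+$. Now fix any $\mu>0$ with $\mu\notin\Lambda$: density supplies $g_k\in\text{span}(M_\Lambda)$ with $x^{\mu}-g_k\to0$ in $L^p_w(A)$, hence $||x^{\mu}-g_k||_{L^p(A')}\le\delta^{-1/p}||x^{\mu}-g_k||_{L^p_w(A)}\to0$. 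The crucial observation is that $x^{\mu}-g_k\in\text{span}(M_{\Lambda\cup\{\mu\}})$ and that the enlarged exponent set $\Lambda\cup\{\mu\}$ still satisfies $(\ref{LKcondition})$, so inequality $(\ref{Erdelyi})$ applies to this exponent set with the set $A'\subseteq[\rho,1]$ of measure $s$, giving
\[
||x^{\mu}-g_k||_{[0,\rho]}\le c\cdot||x^{\mu}-g_k||_{L^p(A')}\longrightarrow 0 .
\]
Hence $x^{\mu}$ lies in the $C[0,\rho]$-closure of $\text{span}(M_\Lambda)$ for every $\mu>0$ (trivially when $\mu\in\Lambda$); that closure is a closed subspace of $C_0[0,\rho]$ containing $\text{span}\{x^n:n\ge1\}$, hence equals $C_0[0,\rho]$. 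So $\text{span}(M_\Lambda)$ is dense in $C_0[0,\rho]$, which by the classical M\"{u}ntz-Sz\'{a}sz theorem forces $\sum\lambda_n^{-1}=\infty$, a contradiction.

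The step I expect to be the main obstacle is the necessity argument -- and within it, not the Remez-type inequality itself (that is Theorem~A), but the two ideas that make it bite: feeding it the \emph{enlarged} exponent set $\Lambda\cup\{\mu\}$, so that the approximation error $x^{\mu}-g_k$ is a genuine M\"{u}ntz polynomial to which $(\ref{Erdelyi})$ applies, and carving out an auxiliary set $A'$ that is simultaneously of positive measure, bounded away from $0$, and on which $w$ is bounded below, so that $L^p_w(A)$-approximation can be passed to $L^p(A')$-approximation on a set inside some $[\rho,1]$. Once that is arranged, $(\ref{Erdelyi})$ turns the hypothetical approximation into uniform control on the initial segment $[0,\rho]$, which is precisely what $\sum\lambda_n^{-1}<\infty$ forbids. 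The sufficiency direction is routine by comparison; there the only care needed is the passage through the uniform norm, which the integrability of $w$ absorbs.
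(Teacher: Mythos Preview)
The paper does not prove Theorem~C: it is quoted verbatim from \cite{BE1997JAMS} as background, preceded only by the sentence ``From Theorem~B, Borwein and Erdelyi extended the M\"{u}ntz--Sz\'{a}sz theorem as follows.'' So there is no in-paper argument to compare against beyond that one-line attribution.

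Your proof is correct, and in one respect it is cleaner than the route the paper's remark suggests. Theorem~B, as stated here, carries the gap condition~$(\ref{gapcondition})$, whereas Theorem~C does not; deducing the necessity half of Theorem~C literally from Theorem~B would therefore require an extra reduction. You avoid this entirely by invoking Theorem~A (the Remez-type inequality~$(\ref{Erdelyi})$) directly, which needs only $\sum\lambda_n^{-1}<\infty$. The device of feeding~$(\ref{Erdelyi})$ the \emph{enlarged} exponent set $\Lambda\cup\{\mu\}$, so that the approximation error $x^{\mu}-g_k$ is itself an admissible M\"{u}ntz polynomial, is precisely the right idea; and the extraction of $A'\subset[\rho,1]$ with $w\ge\delta>0$ is the same localisation manoeuvre the paper itself performs when proving Theorem~\ref{Distances} (compare your $A'$ with the set $B_\epsilon$ in~$(\ref{Bepsilon})$). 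Your sufficiency direction, routed through $C_0[0,1]$ and the classical M\"{u}ntz--Sz\'{a}sz theorem via the inequality $\|h\|_{L^p_w(A)}\le\|h\|_{[0,1]}\|w\|_{L^1(A)}^{1/p}$, is standard and correct. The tacit assumption $r_A<\infty$ that you flag is indeed implicit throughout the paper (Section~2 works on $[0,r_w]$ without comment), so you are on safe ground there as well.
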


\subsection{Our Results}

Our first result, Theorem $\ref{Distances}$, is obtained from the Remez-type inequality $(\ref{Erdelyi})$
and we will refer to it as the $Distance$ result.
It plays a crucial role in the derivation of our other theorems.

Consider a sequence $\Lambda=\{\lambda_n\}_{n=1}^{\infty}$ that satisfies conditions $(\ref{LKcondition})-(\ref{gapcondition})$ and from
the system $M_{\Lambda}$ $(\ref{system})$ exclude an element $e_{n}(x)=x^{\lambda_n }$.
The resulting system is denoted by $M_{\Lambda_{n}}$, that is
\begin{equation}\label{exceptone}
M_{\Lambda_{n}}:=M_{\Lambda}\setminus\{e_{n}\}.
\end{equation}
Let $D_{A,w,p,n}$ stand for the $\bf distance$ between $e_{n}$
and the closed span of $M_{\Lambda_{n}}$ in $L^p_w(A)$, that is
\[
D_{A,w,p,n}:=\inf_{g\in \overline{\text{span}} (M_{\Lambda_{n}})} ||e_{n}-g||_{L^p_w(A)}.
\]
We will derive the sharp lower bound $(\ref{distancelowerbounds})$ for $D_{A,w,p,n}$.
\begin{theorem}\label{Distances}
Let $A$  be a measurable subset of the real half-line $[0,\infty)$ of positive Lebesgue measure
and let $w$ be a  real-valued non-negative integrable function defined on $A$, with $r_w$ as in $(\ref{rw})$.
Let $\Lambda=\{\lambda_n\}_{n=1}^{\infty}$ be a sequence of distinct positive real numbers that
satisfies $(\ref{LKcondition})-(\ref{gapcondition})$.
Then, for every $\epsilon>0$ there is a constant $u_{\epsilon}>0$, independent of $p\ge 1$ and $n\in\mathbb{N}$,
but depending on $\Lambda$, $A$ and $w$, so that
\begin{equation}\label{distancelowerbounds}
D_{A,w,p,n}\ge u_{\epsilon}(r_w-\epsilon)^{\lambda_n}.
\end{equation}
\end{theorem}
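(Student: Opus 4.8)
I would not use the analytic continuation of Theorem $B$ at all; the bound follows from the $L^p$ Remez inequality $(\ref{Erdelyi})$, the elementary fact that on a set where the weight is bounded below the weighted norm dominates the unweighted one, and the \emph{explicit} value of the distance $x^{\lambda_n}\mapsto\overline{\text{span}}(M_{\Lambda_n})$ in $L^2(0,1)$.

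\textbf{Reductions.} First normalise: the part of $A$ on which $w=0$ a.e.\ (in particular $A\cap(r_w,\infty)$) contributes nothing to any $\|\cdot\|_{L^p_w(A)}$ and may be deleted, and the substitution $x\mapsto x/r_w$ maps $\text{span}(M_\Lambda)$ onto itself, scaling $\|x^{\lambda_n}-g\|_{L^p_w(A)}$ by $r_w^{1/p+\lambda_n}$; since $r_w^{1/p}\ge\min(1,r_w)$ uniformly in $p$, we may assume $r_w=1$ and $A\subseteq[0,1]$, and it suffices to prove $D_{A,w,p,n}\ge u_\epsilon(1-\epsilon)^{\lambda_n}$ for $\epsilon\in(0,1)$. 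Because $\text{span}(M_{\Lambda_n})$ is dense in $\overline{\text{span}}(M_{\Lambda_n})$ we have $D_{A,w,p,n}=\inf\{\|e_n-g\|_{L^p_w(A)}:g\in\text{span}(M_{\Lambda_n})\}$, and for each such $g$ the function $f:=e_n-g$ lies in $\text{span}(M_\Lambda)$. Fix $\epsilon\in(0,1)$, put $\rho:=1-\epsilon$, $\sigma:=1-\epsilon/2$.

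\textbf{Step 1 (weighted norm $\to$ sup-norm near $r_w=1$).} Since $r_w=1$, $\int_{A\cap(\sigma,1)}w>0$, so there are $\delta>0$ and measurable $B\subseteq A\cap(\sigma,1)$ with $w\ge\delta$ on $B$ and $m(B)=:s>0$. Applying $(\ref{Erdelyi})$ to $f\in\text{span}(M_\Lambda)$ with the set $B\subseteq[\sigma,1]$, $m(B)\ge s$, and exponent $p=1$ gives $\|f\|_{[0,\sigma]}\le c(\Lambda,s)\,\|f\|_{L^1(B)}$; Hölder on the finite set $B$ and $w\ge\delta$ on $B$ give $\|f\|_{L^1(B)}\le m(B)^{1-1/p}\delta^{-1/p}\|f\|_{L^p_w(A)}$, and $m(B)^{1-1/p}\delta^{-1/p}\le\max(1,m(B))\max(1,\delta^{-1})$ for all $p\ge1$. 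Hence
\begin{equation*}
\|f\|_{[0,\sigma]}\le C_1\,\|f\|_{L^p_w(A)},\qquad C_1=C_1(\Lambda,A,w,\epsilon),
\end{equation*}
with $C_1$ independent of $p\ge1$, $n$ and $g$.

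\textbf{Step 2 (sup-norm $\to$ $L^2$ distance).} Since $\|f\|_{L^2(0,\sigma)}\le\sqrt\sigma\,\|f\|_{[0,\sigma]}$ and $f=e_n-g$ with $g\in\text{span}(M_{\Lambda_n})$,
\begin{equation*}
\|f\|_{[0,\sigma]}\ \ge\ \sigma^{-1/2}\,\|f\|_{L^2(0,\sigma)}\ \ge\ \sigma^{-1/2}\,\mathrm{dist}_{L^2(0,\sigma)}\big(e_n,\overline{\text{span}}(M_{\Lambda_n})\big)\ =\ \sigma^{-1/2}\,\sigma^{\lambda_n+1/2}\,\Delta_n\ =\ \sigma^{\lambda_n}\Delta_n,
\end{equation*}
the third step being the substitution $x=\sigma y$ and $\Delta_n:=\mathrm{dist}_{L^2(0,1)}\big(x^{\lambda_n},\overline{\text{span}}(M_{\Lambda_n})\big)$. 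Thus $D_{A,w,p,n}\ge C_1^{-1}\sigma^{\lambda_n}\Delta_n$, and the theorem reduces to: for every $\beta>0$ there is $c_\beta>0$ with $\Delta_n\ge c_\beta e^{-\beta\lambda_n}$ for all $n$. Indeed, with $\beta:=\ln(\sigma/\rho)>0$ this gives $\sigma^{\lambda_n}\Delta_n\ge c_\beta\rho^{\lambda_n}$, i.e.\ $D_{A,w,p,n}\ge(c_\beta/C_1)(1-\epsilon)^{\lambda_n}$, and undoing the scaling yields $(\ref{distancelowerbounds})$ for every $\epsilon\in(0,r_w)$.

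\textbf{Step 3 (the crux): lower bound for $\Delta_n$.} By the Cauchy-determinant evaluation of the Gram determinants of powers in $L^2(0,1)$ (where $\langle x^\mu,x^\nu\rangle=(\mu+\nu+1)^{-1}$), the distance of $x^{\lambda_n}$ to the span of the remaining powers is, passing to the limit over finite sub-families (the infinite product converging since $\sum_k\lambda_k^{-1}<\infty$),
\begin{equation*}
\Delta_n^2=\frac{1}{2\lambda_n+1}\prod_{k\ne n}\Big(\frac{\lambda_n-\lambda_k}{\lambda_n+\lambda_k+1}\Big)^{2},
\end{equation*}
so it remains to prove the subexponential estimate
\begin{equation*}
-\ln\Delta_n=\tfrac12\ln(2\lambda_n+1)+\sum_{k\ne n}\ln\frac{\lambda_n+\lambda_k+1}{|\lambda_n-\lambda_k|}=o(\lambda_n)\qquad(n\to\infty).
\end{equation*}
I would split the sum by the position of $\lambda_k$ relative to $\lambda_n$, using $\gamma:=\inf_j(\lambda_{j+1}-\lambda_j)>0$ and $\tau_n:=\sum_{k>n}\lambda_k^{-1}\to0$: for $k<n$, from $\lambda_n-\lambda_k\ge(n-k)\gamma$ one gets $\sum_{k<n}\ln\frac{\lambda_n+\lambda_k+1}{\lambda_n-\lambda_k}\le n\ln\frac{e(2\lambda_n+1)}{n\gamma}=o(\lambda_n)$, because $\lambda_n/n\to\infty$ (a consequence of $\sum\lambda_k^{-1}<\infty$); for $k>n$ with $\lambda_k>2\lambda_n$ the terms are $O(\lambda_n/\lambda_k)$ and sum to $O(\lambda_n\tau_n)=o(\lambda_n)$; and for the $k>n$ with $\lambda_k\le2\lambda_n$, summability forces their number $J_n$ to satisfy $J_n\le2\lambda_n\tau_n$, whence, combining with $\lambda_{n+j}-\lambda_n\ge j\gamma$ and the monotonicity of $t\mapsto t\ln(C\lambda_n/t)$, the corresponding partial sum is $O\big(\lambda_n\tau_n\ln(1/\tau_n)\big)=o(\lambda_n)$ since $t\ln(1/t)\to0$ as $t\to0$. \emph{This last point is the main obstacle}: it is essential to exploit $\sum\lambda_k^{-1}<\infty$ (and not merely the gap condition) to bound the number of exponents lying just above $\lambda_n$, since without summability the Müntz--Blaschke product can decay geometrically in $\lambda_n$ and $(\ref{distancelowerbounds})$ would fail.
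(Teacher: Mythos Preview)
Your argument is correct, and the first half---locating a subset $B$ of $A$ near $r_w$ on which $w$ is bounded below and invoking the Remez inequality to dominate a sup norm on $[0,\sigma]$---is essentially the paper's route.  You are in fact \emph{more} careful than the paper about the $p$-independence: the paper applies $(\ref{Erdelyi})$ directly at exponent $p$, so its final constant $m_\epsilon=\alpha_\epsilon^{1/p}\,c_\epsilon\,M_\epsilon$ inherits the Remez constant's dependence on $p$, which it does not address; your trick of fixing $p=1$ in the Remez step and then using H\"older with the uniform bounds $m(B)^{1-1/p}\le\max(1,m(B))$, $\delta^{-1/p}\le\max(1,\delta^{-1})$ closes that gap cleanly.

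Where you genuinely diverge is the endgame.  The paper, having reached $\|e_n-g\|_{L^p_w(A)}\ge C_\epsilon\,\|e_n-g\|_{[a,\delta_\epsilon]}$, simply quotes the Luxemburg--Korevaar sup-norm distance bound $\inf_g\|e_n-g\|_{[a,\delta_\epsilon]}\ge M_{\epsilon,\eta}(\delta_\epsilon-\eta)^{\lambda_n}$ as a black box.  You instead pass to $L^2(0,1)$, write down the explicit Gram/Cauchy-determinant formula $\Delta_n^2=(2\lambda_n+1)^{-1}\prod_{k\ne n}\big((\lambda_n-\lambda_k)/(\lambda_n+\lambda_k+1)\big)^2$, and estimate the product directly, splitting over $k<n$, $k>n$ with $\lambda_k>2\lambda_n$, and the intermediate range.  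Your three partial-sum estimates are all sound (in particular $n/\lambda_n\to0$ via the Cauchy-condensation style argument, and $J_n\le2\lambda_n\tau_n$ via summability).  The paper's path is shorter; yours is self-contained and makes visible exactly where the hypotheses $\inf(\lambda_{n+1}-\lambda_n)>0$ and $\sum\lambda_n^{-1}<\infty$ are each used.
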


We then revisit Theorem $B$ and give some more details regarding the coefficients $a_n$ appearing in $(\ref{coeff}$).
If $P_j(x)=\sum_{n=1}^{p(j)} a_{j,n} x^{\lambda_n}$, for $j=1,2,\dots,$ are the functions in the span of $M_{\Lambda}$ such that $||f-P_j||_{L^p_{w,A}}\to 0$ as $j\to\infty$, we show that for each fixed $n\in\mathbb{N}$,
the $a_{j,n}$ coefficients of the $P_j$'s tend to the $a_n$ coefficient of $(\ref{coeff})$ as $j\to\infty$, see $(\ref{important})$.
In turn, this result is essential for deriving Lemma $\ref{span}$.

\begin{lemma}\label{revisiting}
Consider all the assumptions of Theorem B. Let
$f\in L^p_w (A)$ belong to the $L^p_w (A)$ closure of $\text{span}(M_{\Lambda})$, thus there exists a sequence
$\{P_j\}_{j=1}^{\infty}$ in $\text{span}(M_{\Lambda})$ where $P_j(x)=\sum_{n=1}^{p(j)} a_{j,n} x^{\lambda_n}$,
such that $||f-P_j||_{L^p_{w,A}}\to 0$ as $j\to\infty$. Then $f$ extends analytically
in the slit disk $D_{r_w}$ $(\ref{slit})$ and can be represented as
\begin{equation}\label{analyticextension}
f(z)=\sum_{n=1}^{\infty} a_n z^{\lambda_n}, \qquad z\in D_{r_w},\qquad
\end{equation}
with the series converging uniformly on compact subsets of $D_{r_w}$ and such that
\begin{equation}\label{important}
a_n=\lim_{j\to\infty} a_{j,n}\qquad \text{for}\quad n=1,2,\dots.
\end{equation}
\end{lemma}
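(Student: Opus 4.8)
The analytic continuation of $f$ to the slit disk $D_{r_w}$ and the existence of \emph{some} representation $f(z)=\sum_{n}a_n z^{\lambda_n}$ there are already delivered by Theorem~B, so the substance of the lemma lies in the identification $a_n=\lim_{j\to\infty}a_{j,n}$ together with the local uniform convergence of the series. The plan is to extract each coefficient $a_{j,n}$ of $P_j$ by a bounded linear functional whose norm is controlled by Theorem~\ref{Distances}. Since $D_{A,w,p,n}\ge u_\epsilon(r_w-\epsilon)^{\lambda_n}>0$, the function $e_n=x^{\lambda_n}$ lies at a positive distance $d_n:=D_{A,w,p,n}$ from $\overline{\operatorname{span}}(M_{\Lambda_n})$; so the Hahn--Banach theorem on the normed space $L^p_w(A)$ furnishes $\phi_n\in(L^p_w(A))^{*}$ with $\phi_n(e_n)=1$, with $\phi_n\equiv 0$ on $\overline{\operatorname{span}}(M_{\Lambda_n})$, and with $\|\phi_n\|=1/d_n\le u_\epsilon^{-1}(r_w-\epsilon)^{-\lambda_n}$ (the norm equals $\sup_t|t|/\inf_v\|v+te_n\|=1/d_n$ on the span $\overline{\operatorname{span}}(M_{\Lambda_n})\oplus\mathbb{R}e_n$, then extend). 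By linearity $a_{j,n}=\phi_n(P_j)$, and since $\phi_n$ is bounded and $\|f-P_j\|_{L^p_w(A)}\to0$, the limit $a_n:=\lim_{j}a_{j,n}=\phi_n(f)$ exists; this is \eqref{important}.

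Next I would check these numbers are the coefficients of the analytic extension. From $a_n=\phi_n(f)$ and the norm bound, $|a_n|\le u_\epsilon^{-1}\|f\|_{L^p_w(A)}(r_w-\epsilon)^{-\lambda_n}$ for every $\epsilon\in(0,r_w)$, hence $\limsup_n|a_n|^{1/\lambda_n}\le 1/r_w$; together with the gap condition \eqref{gapcondition} (which gives $\lambda_n\ge\lambda_1+(n-1)\delta$, $\delta=\inf_n(\lambda_{n+1}-\lambda_n)>0$) this makes $G(z):=\sum_n a_n z^{\lambda_n}$ converge absolutely on $\{|z|<r_w\}$ and uniformly on $\{|z|\le R\}$ for each $R<r_w$, so $G$ is analytic on $D_{r_w}$. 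The same estimate applied with $\sup_j\|P_j\|_{L^p_w(A)}<\infty$ yields the uniform-in-$j$ bound $|a_{j,m}|\le C_\epsilon(r_w-\epsilon)^{-\lambda_m}$, whence $\sup_j\sup_{|z|\le R}|P_j(z)|<\infty$ for $R<r_w$, i.e. $\{P_j\}$ is locally uniformly bounded on $D_{r_w}$. By Montel's theorem $\{P_j\}$ is normal; any locally uniform subsequential limit $H$ is analytic, and since uniform convergence on $A\cap[0,R]$ forces $L^p_w$-convergence there, $H=f$ a.e. on $A\cap[0,R]$ for every $R<r_w$. As $m(A\cap[0,r_w))>0$ (a consequence of the defining property of $r_w$ together with \eqref{measurezero}), $H$ agrees a.e. with Theorem~B's extension $F$ on a set possessing an accumulation point inside $D_{r_w}$, so $H=F$; hence $P_j\to F$ locally uniformly on $D_{r_w}$. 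Finally, for $z\in(0,r_w)$ pick $\epsilon$ with $z<r_w-\epsilon$: the majorant $|a_{j,m}z^{\lambda_m}|\le C_\epsilon\big(z/(r_w-\epsilon)\big)^{\lambda_m}$ is summable and independent of $j$, so $F(z)=\lim_j\sum_m a_{j,m}z^{\lambda_m}=\sum_m a_m z^{\lambda_m}=G(z)$; thus $F\equiv G$ on $D_{r_w}$, which is \eqref{analyticextension}, and $f=F=G$ a.e. on $A\cap[0,r_w)$.

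I expect the main obstacle to be that last interchange of $\lim_j$ with the infinite sum --- equivalently, proving that the coefficients of the locally uniform limit of the $P_j$ are the limits of their coefficients. A naive tailing-off argument is unavailable, since $A$ may crowd the boundary circle $|z|=r_w$, where $f=G$ need not be bounded; what rescues the argument is precisely the sharpness of Theorem~\ref{Distances}, which converts the biorthogonal functionals $\phi_n$ into a uniform geometric-type majorant $|a_{j,m}|\lesssim(r_w-\epsilon)^{-\lambda_m}$ holding for all $j$ simultaneously. The only other non-elementary ingredient is the duality identity equating $D_{A,w,p,n}^{-1}$ with the minimal norm of a functional that is $1$ at $e_n$ and vanishes on $\overline{\operatorname{span}}(M_{\Lambda_n})$, a routine Hahn--Banach computation valid for every $p\ge 1$.
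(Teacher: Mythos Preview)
Your proof is correct but follows a genuinely different route from the paper's. The paper works \emph{inside an interval}: it first uses the Remez-type inequality (Theorem~A) to transfer Cauchy-ness of $\{P_j\}$ from $L^p_w(A)$ to the sup norm on $[0,\delta_\epsilon]$ (with $\delta_\epsilon=r_w-\epsilon/2$), and only then extracts coefficients via the Luxemburg--Korevaar distance estimate in $L^p(0,\delta_\epsilon)$, yielding both $|a_{j,k}|\le m_\eta\|P_j\|_{L^p(0,\delta_\epsilon)}(\delta_\epsilon-\eta)^{-\lambda_k}$ and the analogous Cauchy bound for $|a_{i,k}-a_{j,k}|$. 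You instead stay in $L^p_w(A)$ throughout: Theorem~\ref{Distances} has already packaged the Remez and Luxemburg--Korevaar inputs, and Hahn--Banach turns each coefficient map into a bounded functional $\phi_n$ of norm exactly $D_{A,w,p,n}^{-1}$, so that $a_{j,n}=\phi_n(P_j)\to\phi_n(f)$ is immediate. Your approach is more conceptual and reuses Theorem~\ref{Distances} cleanly; the paper's is more elementary, requiring no duality. One minor point: the Montel step is a detour. The uniform-in-$j$ majorant $|a_{j,m}z^{\lambda_m}|\le C_\epsilon\bigl(|z|/(r_w-\epsilon)\bigr)^{\lambda_m}$ that you already derived gives $P_j\to G$ uniformly on $[0,R]$ for each $R<r_w$ (dominated convergence for the series, applied to $\sum_m|a_{j,m}-a_m|R^{\lambda_m}$), and combining this with $P_j\to f$ in $L^p_w(A\cap[0,R])$ yields $G=f$ a.e.\ on $A\cap[0,r_w)$ directly---which is essentially how the paper concludes---without invoking Theorem~B's extension $F$ or normal families at all.
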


We will show below that more is true regarding these $a_n$ coefficients in the case of the Hilbert space $L^2_w (A)$.
First we note that by $(\ref{distancelowerbounds})$ the distances $D_{A,w,2,n}$ are positive
thus $M_{\Lambda}$ is what we call a $minimal$ system in its closed span in $L^2_w (A)$. It is known that
a family $\{f_n\}_{n\in \mathbb{N}}$ of functions in a separable Hilbert space $\cal{H}$ endowed with an inner product
$\langle \cdot\, , \cdot \rangle_{\cal{H}}$ is minimal,
if and only if it has a $biorthogonal$ sequence in $\cal{H}$ (see \cite[Lemma 3.3.1]{Christensen}): in other words,
there exists a sequence $\{g_n\}_{n\in \mathbb{N}}\subset\cal{H}$ so that
\[
\langle g_n, f_m\rangle_{\cal{H}} =\begin{cases} 1, & m=n, \\  0, & m\not=n.\end{cases}
\]
A family $\{f_n\}$ in $\cal{H}$ is said to be complete if its closed span in $\cal{H}$ is equal to $\cal{H}$.
If a family is both complete and minimal then it has a unique biorthogonal family
in $\cal H$, however this family is not necessarily complete in $\cal H$ (see \cite{YoungP}).

Hence, assuming conditions $(\ref{LKcondition})-(\ref{gapcondition})$, the system $M_{\Lambda}$ has a unique biorthogonal family
in its closed span in $L^2_w (A)$. We denote this family by $r_{\Lambda}$
and we will show that $r_{\Lambda}$ is complete in that closure, thus
$M_{\Lambda}$ and $r_{\Lambda}$ are $Markushevich\,\, bases$ for the closed span of $M_{\Lambda}$ in $L^2_w (A)$.
To achieve this, we first obtain the sharp upper bound $(\ref{rnkbound})$ for the norms of the elements in the family $r_{\Lambda}$,
and then we derive the Fourier-type series representation $(\ref{representationf})$ for functions in the closed span of $M_{\Lambda}$
in $L^2_w (A)$. Regarding $(\ref{representationf})$, we
show that for functions $f$ in the closed span of $M_{\Lambda}$ in $L^2_w (A)$, the coefficient $a_n$ in the series expansion $f(x)=\sum_{n=1}^{\infty} a_n x^{\lambda_n}$ as in $(\ref{analyticextension})$,
is equal to the inner product $\langle f, r_{n} \rangle_{w, A}$.

\begin{theorem}\label{biorthogonalsystem}
Let $A$  be a measurable subset of the real half-line $[0,\infty)$ of positive Lebesgue measure
and let $w$ be a  real-valued non-negative integrable function defined on $A$, with $r_w$ as in $(\ref{rw})$.
Let $\Lambda=\{\lambda_n\}_{n=1}^{\infty}$ be a sequence of distinct positive real numbers that
satisfies $(\ref{LKcondition})-(\ref{gapcondition})$.
Then there exists a family of functions
\[
r_{\Lambda}=\{r_{n}:\,\, n\in\mathbb{N}\}\subset L^2_w(A)
\]
so that it is the unique biorthogonal sequence to $M_{\Lambda}$ in $L^2_w(A)$ which belongs to the closed span
of the system $M_{\Lambda}$ in $L^2_w(A)$, and the following are true:

$(I)$ For every $\epsilon>0$ there is a constant $m_{\epsilon}>0$, independent of $n\in\mathbb{N}$,
but depending on $\Lambda$, $A$ and $w$, so that
\begin{equation}\label{rnkbound}
||r_{n}||_{L^2_w (A)} \le  m_{\epsilon}(r_w-\epsilon)^{-\lambda_n},\qquad \forall\,\, n\in\mathbb{N}.
\end{equation}

$(II)$ Each function $f$ in the closed span of the system $M_{\Lambda}$ in $L^2_w (A)$,
extends analytically in the slit disk $D_{r_w}$ $(\ref{slit})$,  so that $f$ admits the Fourier-type series representation
\begin{equation}\label{representationf}
f(z)=\sum_{n=1}^{\infty}\langle f, r_{n} \rangle_{w, A} z^{\lambda_n},
\end{equation}
converging uniformly on compact subsets of the slit disk $D_{r_w}$.

$(III)$ For each function $f\in L^2_w(A)$, its associated series
\begin{equation}\label{representationffourier}
f^*(z):=\sum_{n=1}^{\infty} \langle f, r_{n} \rangle_{w, A} z^{\lambda_n},
\end{equation}
is analytic in $D_{r_w}$ and $f^*$ belongs to the closed span of the system $M_{\Lambda}$ in $L^2_w (A)$.

$(IV)$ The system $M_{\Lambda}$ is a Markushevich basis in its closed span in $L^2_w (A)$,
that is,
\[
\overline{\text{span}}(r_{\Lambda})=\overline{\text{span}}(M_{\Lambda})\quad \text{in}\quad L^2_w(A).
\]
\end{theorem}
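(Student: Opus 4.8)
The plan is to reduce all four assertions to Theorem \ref{Distances}, Lemma \ref{revisiting}, and standard Hilbert--space facts about minimal systems; I will write $H:=\overline{\text{span}}(M_\Lambda)$ for the closed span in $L^2_w(A)$. First, since $(\ref{distancelowerbounds})$ with $p=2$ and any $\epsilon\in(0,r_w)$ gives $D_{A,w,2,n}\ge u_\epsilon(r_w-\epsilon)^{\lambda_n}>0$, the element $e_n$ lies outside $\overline{\text{span}}(M_{\Lambda_n})$, so $M_\Lambda$ is minimal in $H$ and hence has a biorthogonal sequence; since $M_\Lambda$ is trivially complete in $H$, the biorthogonal sequence lying in $H$ is unique, because two such sequences would differ by an element of $H$ orthogonal to all of $M_\Lambda$, hence to $H$, hence zero. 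I denote this sequence by $r_\Lambda=\{r_n\}\subset H$, so that $\langle r_n,e_m\rangle_{w,A}=\delta_{nm}$.

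For part $(I)$ I would put $H_n:=\overline{\text{span}}(M_{\Lambda_n})$ and split $e_n=g_n+h_n$ into its orthogonal projection $g_n$ onto $H_n$ and the remainder $h_n$, so that $h_n$ lies in the orthogonal complement of $H_n$ inside $H$ and $\|h_n\|_{L^2_w(A)}=D_{A,w,2,n}$. Since $e_n\notin H_n$, that orthogonal complement is the line $\mathbb{R}h_n$; as $r_n\in H$ is orthogonal to every $e_m$ with $m\ne n$, it is orthogonal to $H_n$, hence $r_n=c\,h_n$, and then $1=\langle r_n,e_n\rangle_{w,A}=c\langle h_n,g_n+h_n\rangle_{w,A}=c\,\|h_n\|_{L^2_w(A)}^{2}$ forces $c=\|h_n\|_{L^2_w(A)}^{-2}$. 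Thus $\|r_n\|_{L^2_w(A)}=D_{A,w,2,n}^{-1}$, and combining this with $(\ref{distancelowerbounds})$ yields $(\ref{rnkbound})$ with $m_\epsilon:=u_\epsilon^{-1}$.

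Part $(II)$ is the crux. Given $f\in H$, I choose $P_j=\sum_k a_{j,k}x^{\lambda_k}\in\text{span}(M_\Lambda)$ with $\|f-P_j\|_{L^2_w(A)}\to0$; biorthogonality gives $\langle P_j,r_n\rangle_{w,A}=a_{j,n}$ for all $j,n$, so continuity of the inner product yields $\langle f,r_n\rangle_{w,A}=\lim_{j\to\infty}a_{j,n}$, which by $(\ref{important})$ of Lemma \ref{revisiting} equals the coefficient $a_n$ in the analytic representation $(\ref{analyticextension})$ of $f$ on $D_{r_w}$; substituting $a_n=\langle f,r_n\rangle_{w,A}$ into $(\ref{analyticextension})$ gives $(\ref{representationf})$ with the stated uniform convergence. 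For $(III)$, I would decompose an arbitrary $f\in L^2_w(A)$ as $f=f_1+f_2$ with $f_1$ the orthogonal projection onto $H$ and $f_2\perp H$; since every $r_n\in H$, $\langle f,r_n\rangle_{w,A}=\langle f_1,r_n\rangle_{w,A}$, so the series $(\ref{representationffourier})$ for $f$ is the one for $f_1$, which by $(II)$ is analytic in $D_{r_w}$ and agrees a.e. on $A$ with $f_1\in H$ — here one uses that $(\ref{measurezero})$ and the definition of $r_w$ force $m(A\setminus[0,r_w])=0$. Finally, for $(IV)$, $r_\Lambda\subset H$ gives $\overline{\text{span}}(r_\Lambda)\subseteq H$; conversely, if $f\in H$ satisfies $\langle f,r_n\rangle_{w,A}=0$ for all $n$, then $(\ref{representationf})$ forces $f\equiv0$ on $D_{r_w}$, hence $f=0$ a.e. on $A$, so $r_\Lambda$ is complete in $H$ and $\overline{\text{span}}(r_\Lambda)=H=\overline{\text{span}}(M_\Lambda)$.

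The one step requiring genuine analytic input, and therefore the main obstacle, is the identification $a_n=\langle f,r_n\rangle_{w,A}$ in step $(II)$: it is what ties the analytic Clarkson--Erd\H{o}s--Schwartz picture of $H$ provided by Lemma \ref{revisiting} to its Hilbert--space biorthogonal picture, and it rests entirely on the convergence $a_n=\lim_j a_{j,n}$, which in turn depends on the Remez-type inequality underlying Theorem \ref{Distances}. Everything else is routine orthogonal-projection bookkeeping.
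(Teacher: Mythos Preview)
Your proof is correct and matches the paper's argument for the construction of $r_\Lambda$, for part $(I)$ (both compute $\|r_n\|_{L^2_w(A)}=D_{A,w,2,n}^{-1}$ via the orthogonal projection of $e_n$ onto $\overline{\text{span}}(M_{\Lambda_n})$), and for parts $(III)$ and $(IV)$ (orthogonal decomposition onto $H$, then the representation $(\ref{representationf})$ forces any $f\in H$ orthogonal to all $r_n$ to vanish).

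The one genuine difference is in part $(II)$. You identify $a_n=\langle f,r_n\rangle_{w,A}$ directly: biorthogonality gives $\langle P_j,r_n\rangle_{w,A}=a_{j,n}$, continuity of the inner product gives $\langle f,r_n\rangle_{w,A}=\lim_j a_{j,n}$, and $(\ref{important})$ of Lemma~\ref{revisiting} closes the loop. The paper instead proves an auxiliary Lemma~\ref{span} showing that $f-a_n x^{\lambda_n}\in\overline{\text{span}}(M_{\Lambda_n})$, and then uses $r_n\perp\overline{\text{span}}(M_{\Lambda_n})$ to conclude $\langle f-a_n e_n,r_n\rangle_{w,A}=0$. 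Both routes ultimately rest on $(\ref{important})$, but yours bypasses Lemma~\ref{span} entirely and is cleaner in the Hilbert setting; the paper's route has the minor compensating advantage that Lemma~\ref{span} is stated for all $L^p_w(A)$, $p\ge1$, and is thus available for other purposes.
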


\begin{remark}
It follows from $(III)$ above that for $f(x)=1$ on $A$, the series
\[
\sum_{n=1}^{\infty} \langle 1, r_{n} \rangle_{w, A} z^{\lambda_n},
\]
is analytic in $D_{r_w}$ and belongs to the closed span of the system $M_{\Lambda}$ in $L^2_w (A)$.
\end{remark}

Next, now that we know that the families $M_{\Lambda}$ and $r_{\Lambda}$ are Markushevich bases for the
closed span of $M_{\Lambda}$ in $L^2_w (A)$, we ask if they are also
\[
\text{Strong Markushevich Bases}
\]
for this closure.
This means whether for any disjoint union of two sets $N_1$ and $N_2$, such that
$\mathbb{N}=N_1\cup N_2$, the closed span of the mixed system
\[
\{x^{\lambda_n}:\,\, n\in N_1\}\cup\{r_{n}:\,\, n\in N_2\}
\]
is equal to the closed span of $M_{\Lambda}$ in $L^2_w (A)$. Another term used instead of the phrase ``strong Markushevich basis''
is the phrase ``$hereditarily\,\, complete\,\, system$''.

\smallskip

\begin{remark}
The answer to the above question is $affirmative$ in the special case
when $m\le w(x)\le M$ on $A$ for some positive numbers $m$ and $M$ and the set $A$ contains an interval $[a,r_A]$ for some $0\le a<r_A$
where $r_A$ is the essential supremum of $A$ (see Theorem $\ref{hereditarycompleteness}$).
\end{remark}

\begin{remark}
It would be very interesting to investigate also the case when
$A$ does not contain such an interval.
\end{remark}

Hereditary completeness of exponential systems $\{e^{i\lambda_n t}:\,\, n\in \mathbb{N}\}$ in $L^2(-a,a)$
has been studied in \cite{Baranov2013} where it was proved that a complete and minimal exponential system in $L^2(-a,a)$
is hereditarily complete up to one-dimensional effect (\cite[Theorem 1.1]{Baranov2013}). In \cite[Theorem 1.1]{Zikkos2024JMAA}
we proved that if $\Lambda$ satisfies the conditions $(\ref{LKcondition})-(\ref{gapcondition})$, then the exponential system
$\{e^{\lambda_n t}\}_{n=1}^{\infty}$
is hereditarily complete in its closed span in $L^2 (a,b)$. The result below extends the work of \cite{Zikkos2024JMAA}.

\begin{theorem}\label{hereditarycompleteness}
Consider all the assumptions of Theorem $\ref{biorthogonalsystem}$.
Suppose however that $m\le w(x)\le M$ on $A$ for some positive numbers $m$ and $M$ thus by Remark $\ref{equalr}$ we have $r_w = r_A$.
Suppose also that the set $A$ contains an interval $[a, r_A]$ for some $a\ge 0$.
Then the system $M_{\Lambda}$ is hereditarily complete in its closed span in $L^2_w (A)$.
\end{theorem}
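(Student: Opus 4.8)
The plan is to reduce the hereditary completeness statement to the fact that the "mixed" biorthogonal system can be used to recover any function in the closure. Fix a partition $\mathbb{N}=N_1\cup N_2$ and let $f$ be an element of $\overline{\text{span}}(M_\Lambda)$ in $L^2_w(A)$ that is orthogonal to $\{x^{\lambda_n}:n\in N_1\}\cup\{r_n:n\in N_2\}$. By Theorem \ref{biorthogonalsystem}(II) the function $f$ extends analytically to $D_{r_w}=D_{r_A}$ and has the Fourier-type expansion $f(z)=\sum_{n\ge 1}\langle f,r_n\rangle_{w,A}\, z^{\lambda_n}$. The condition $\langle f, r_n\rangle_{w,A}=0$ for $n\in N_2$ kills all those coefficients, so that $f(z)=\sum_{n\in N_1}\langle f,r_n\rangle_{w,A}\, z^{\lambda_n}$; in particular $f$ lies in $\overline{\text{span}}\{x^{\lambda_n}:n\in N_1\}$ (using Lemma \ref{revisiting} together with Theorem \ref{Distances} to guarantee that the partial sums of this sub-series converge to $f$ in the $L^2_w(A)$ norm). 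It remains to show $f\equiv 0$, for which I must use the orthogonality to $x^{\lambda_n}$ for $n\in N_1$.

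The key step is to turn the remaining orthogonality relations into an analytic/moment statement on an interval. Since $A\supset[a,r_A]$ and $m\le w\le M$, I first argue that a function $g$ in $\overline{\text{span}}(M_\Lambda)$ in $L^2_w(A)$ that vanishes a.e. on $[a,r_A]$ must vanish a.e. on all of $A$: indeed, by Lemma \ref{revisiting} such a $g$ extends analytically to the slit disk $D_{r_A}$ and the interval $(a,r_A)$ is a set of positive length inside that disk, so $g\equiv 0$ there forces $g\equiv 0$ throughout $D_{r_A}$, hence a.e. on $A\cap[0,r_A)$; the part of $A$ lying in $[r_A,\infty)$ has measure zero by the definition \eqref{essentialsup} of $r_A$. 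Consequently the restriction map $g\mapsto g|_{[a,r_A]}$ is injective on $\overline{\text{span}}(M_\Lambda)$, and moreover $\|g\|_{L^2_w(A)}$ and $\|g|_{[a,r_A]}\|_{L^2_w([a,r_A])}$ are comparable (the upper bound is trivial; the lower bound again follows from analytic continuation and the Remez-type estimate behind Theorem \ref{Distances}, which controls $\|g\|_{L^2_w(A\setminus[a,r_A])}$ by $\|g\|_{L^2_w([a,r_A])}$ up to a constant depending only on $\Lambda$, $A$, $w$). Thus everything can be transported to the interval $[a,r_A]$ with the bounded weight $w$, where the situation is the classical Müntz setting.

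With $f$ as above, the relations $\langle f, x^{\lambda_n}\rangle_{w,A}=0$ for $n\in N_1$ together with $f(z)=\sum_{n\in N_1}\langle f,r_n\rangle_{w,A}z^{\lambda_n}$ give, after restricting to $[a,r_A]$, a function $f$ in the closed span of $\{x^{\lambda_n}:n\in N_1\}$ in $L^2_w([a,r_A])$ that is simultaneously orthogonal to every $x^{\lambda_n}$, $n\in N_1$, in that space (here one must also check that orthogonality over $A$ against $x^{\lambda_n}$ descends to orthogonality over $[a,r_A]$ against the same exponent — this uses that $f$ itself is supported, up to the analytic-continuation argument, by its values on $[a,r_A]$, and that the "tail" contribution $\int_{A\setminus[a,r_A]} f(x)x^{\lambda_n}w(x)\,dx$ can be rewritten in terms of the biorthogonal expansion). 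A function in a Hilbert space that lies in the closed span of a system and is orthogonal to every member of that system is zero; hence $f|_{[a,r_A]}=0$, and by the injectivity established above $f\equiv 0$ in $L^2_w(A)$. This proves that $\{x^{\lambda_n}:n\in N_1\}\cup\{r_n:n\in N_2\}$ has dense span, i.e. $M_\Lambda$ is hereditarily complete.

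I expect the main obstacle to be the last descent step: showing cleanly that the orthogonality of $f$ against $x^{\lambda_n}$ over the \emph{whole} set $A$ forces the corresponding orthogonality over the sub-interval $[a,r_A]$ (or, equivalently, that the contribution of $A\setminus[a,r_A]$ vanishes). The natural way around this is to avoid splitting the integral directly and instead work intrinsically in $L^2_w(A)$: one shows that $f$, being in $\overline{\text{span}}\{x^{\lambda_n}:n\in N_1\}$ by the coefficient-vanishing argument and orthogonal to each $x^{\lambda_n}$ with $n\in N_1$ by hypothesis, is orthogonal to its own closed span and hence zero — so the interval $[a,r_A]$ and the bound on $w$ are really only needed to guarantee, via Theorem \ref{biorthogonalsystem} and the analytic continuation, that the sub-series for $f$ genuinely converges in norm to an element of $\overline{\text{span}}\{x^{\lambda_n}:n\in N_1\}$ rather than merely pointwise on $D_{r_A}$. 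Making that norm-convergence rigorous, uniformly in the partition, is where the quantitative estimates \eqref{distancelowerbounds} and \eqref{rnkbound} of Theorems \ref{Distances} and \ref{biorthogonalsystem} will do the work.
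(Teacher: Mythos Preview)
Your global strategy coincides with the paper's: take $f$ in $\overline{\text{span}}(M_\Lambda)$ orthogonal to the mixed system, use Theorem~\ref{biorthogonalsystem}(II) to kill the $N_2$-coefficients so that $f(x)=\sum_{n\in N_1}\langle f,r_n\rangle_{w,A}\,x^{\lambda_n}$, then argue that $f$ actually lies in $\overline{\text{span}}\{x^{\lambda_n}:n\in N_1\}$ in the $L^2_w(A)$ norm, and conclude $f=0$ because $f$ is orthogonal to every generator of that span. The first and last steps are fine, and your final paragraph correctly isolates the crux.

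The genuine gap is the step you yourself flag: showing $f\in\overline{\text{span}}\{x^{\lambda_n}:n\in N_1\}$ in $L^2_w(A)$. You propose to extract this from the estimates \eqref{distancelowerbounds}--\eqref{rnkbound}, but those only give $|\langle f,r_n\rangle_{w,A}|\le m_\epsilon\|f\|\,(r_w-\epsilon)^{-\lambda_n}$, which yields uniform convergence of the partial sums on compact subsets of $[0,r_w)$ and nothing near the right endpoint $r_w=r_A$. Since $[a,r_A]\subset A$, the $L^2_w(A)$ norm sees that endpoint, and the majorant $\sum(r_w-\epsilon)^{-\lambda_n}x^{\lambda_n}$ diverges for $x>r_w-\epsilon$. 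No finite-$\epsilon$ bound of this type controls the $L^2$ tail, so Lemma~\ref{revisiting} and Theorem~\ref{Distances} do not deliver norm convergence of the sub-series. Your middle paragraphs (injectivity of restriction, Remez-type comparison of norms) do not substitute for this either; they are correct observations but do not place $f$ in the $N_1$-span.

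The paper closes the gap with an external ingredient you do not invoke: the \emph{converse} to the Clarkson--Erd\H{o}s--Schwartz phenomenon (\cite{Korevaar1947}, \cite[Theorem~8.2]{LK}, \cite[Corollary~6.2.4]{Gurariy}), which states that any $g\in L^2(0,1)$ admitting a pointwise representation $g(x)=\sum a_n x^{\lambda_n}$ on $(0,1)$ already lies in the $L^2(0,1)$ closed span of $\{x^{\lambda_n}\}$. The hypotheses $m\le w\le M$ and $[a,r_A]\subset A$ are used precisely to pass from $L^2_w(A)$ to $L^2(0,r_A)$ (via $f\in L^2_w([a,r_A])\subset L^2(a,r_A)$ and continuity of $f$ on $[0,a]$), apply this converse theorem with the sub-exponents $\{\lambda_n:n\in N_1\}$, and then pass back to $L^2_w(A)$. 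That converse result is the missing key lemma in your proposal.
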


We remark that hereditary completeness is related to the $Spectral\,\, Synthesis$ problem for linear operators
\cite{Markus1970,Baranov2013,Baranov2015,Baranov2022}.
Let $T$ be a bounded linear operator in a separable Hilbert space $\cal H$ such that $T$ has a set of eigenvectors
which is complete in $\cal H$. Then $T$ admits $Spectral\,\, Synthesis$ if for any invariant subspace $A$ of $T$,
the set of eigenvectors of $T$ contained in $A$ is complete in $A$.
In \cite{Wermer1952} J. Wermer proved that if an operator $T$ is $\bf both$ compact and normal, then $T$
admits spectral synthesis. The conclusion might not be true if one of these two conditions does not hold
(see \cite[Theorem 2]{Wermer1952} and \cite[Theorem 4.2]{Markus1970}). To this end we point out that
A. S. Markus \cite[Theorem 4.1]{Markus1970} proved that if a compact operator $T$ has a trivial kernel and non-zero simple eigenvalues with corresponding eigenvectors $\{f_n\}$, then $T$ admits spectral synthesis if and only if the family $\{f_n\}$ is hereditarily complete in $\cal H$.

Intrigued by the above, we present below a class of compact, non-normal operators on the closed span of $M_{\Lambda}$ in $L^2_w (A)$
that admit spectral synthesis.

\begin{theorem}\label{spectralsynthesis}
Consider all the assumptions of Theorem $\ref{hereditarycompleteness}$. Denote by $[\overline{\text{span}}(M_{\Lambda})]_{w, A}$
the closed span of the system $M_{\Lambda}$ in $L^2_w (A)$.
Fix a sequence $\{u_n\}_{n=1}^{\infty}$ of distinct non-zero real numbers such that
\begin{equation}\label{un}
|u_n|\le  \rho^{\lambda_n}\qquad \text{for\,\, some}\qquad 0<\rho<1.
\end{equation}
Then $T : [\overline{\text{span}}(M_{\Lambda})]_{w, A}\to[\overline{\text{span}}(M_{\Lambda})]_{w, A}$
defined as
\[
Tf(x) : = \sum_{n=1}^{\infty} \langle f , r_n\rangle_{w, A}\cdot u_n\cdot  x^{\lambda_n},
\]
is an operator which is compact, not normal, and admits Spectral Synthesis.
In particular, this is true for the operator $T_{\rho}(f):=f(\rho x)$
for any fixed $0<\rho<1$.
\end{theorem}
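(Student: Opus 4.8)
The plan is to present this as a corollary of Theorems \ref{biorthogonalsystem} and \ref{hereditarycompleteness} together with the criterion of Markus quoted in the introduction; no genuinely new analytic difficulty should arise. First I would verify that $T$ is a well-defined bounded operator on $[\overline{\text{span}}(M_{\Lambda})]_{w,A}$. For $f$ in this space, Cauchy--Schwarz bounds the general term of the defining series by $\|f\|_{L^2_w(A)}\,\|r_n\|_{L^2_w(A)}\,|u_n|\,\|x^{\lambda_n}\|_{L^2_w(A)}$. Since $m\le w\le M$ on $A$ and, up to a null set, $A\subset[0,r_A]$ with $r_w=r_A$, one gets $\|x^{\lambda_n}\|_{L^2_w(A)}\le C_0 r_A^{\lambda_n}$ for a constant $C_0$ depending only on $M$ and $r_A$; combined with $\|r_n\|_{L^2_w(A)}\le m_\epsilon(r_A-\epsilon)^{-\lambda_n}$ from Theorem \ref{biorthogonalsystem}(I) and $|u_n|\le\rho^{\lambda_n}$, the term is at most $C_0 m_\epsilon\|f\|\,\big(\rho r_A/(r_A-\epsilon)\big)^{\lambda_n}$. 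Choosing $\epsilon<r_A(1-\rho)$ makes the base $q:=\rho r_A/(r_A-\epsilon)<1$, and the gap condition $(\ref{gapcondition})$ together with $\lambda_n\to\infty$ forces $\sum_n q^{\lambda_n}<\infty$. Hence the series converges in $L^2_w(A)$ with $\|Tf\|\le C\|f\|$, and since its partial sums lie in $\text{span}(M_{\Lambda})$ the sum $Tf$ lies in $[\overline{\text{span}}(M_{\Lambda})]_{w,A}$. Compactness then follows immediately: the truncations $T_N f:=\sum_{n\le N}\langle f,r_n\rangle_{w,A}u_n x^{\lambda_n}$ are finite-rank and $\|T-T_N\|\le C\sum_{n>N}q^{\lambda_n}\to 0$.

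Next I would determine the eigenstructure. Biorthogonality $\langle x^{\lambda_m},r_n\rangle_{w,A}=\delta_{mn}$ gives $Tx^{\lambda_m}=u_m x^{\lambda_m}$, so each $x^{\lambda_m}$ is an eigenvector with nonzero eigenvalue $u_m$ and the eigenvectors span a dense subspace of $[\overline{\text{span}}(M_{\Lambda})]_{w,A}$. Conversely, if $Tf=\mu f$, pairing the $L^2_w(A)$-convergent defining series with $r_m$ yields $(u_m-\mu)\langle f,r_m\rangle_{w,A}=0$ for every $m$. If $\mu\notin\{u_n\}$ this forces $\langle f,r_m\rangle_{w,A}=0$ for all $m$, and since $r_\Lambda$ is complete in $[\overline{\text{span}}(M_{\Lambda})]_{w,A}$ by Theorem \ref{biorthogonalsystem}(IV) and $f$ lies there, $f=0$; in particular $0\notin\{u_n\}$, so $\ker T=\{0\}$. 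If $\mu=u_k$ (the index $k$ is unique since the $u_n$ are distinct) then $\langle f,r_m\rangle_{w,A}=0$ for all $m\ne k$, and $f-\langle f,r_k\rangle_{w,A}x^{\lambda_k}$ is orthogonal to every $r_m$, hence zero, so the eigenspace of $u_k$ is $\mathbb{R}\,x^{\lambda_k}$: all eigenvalues are simple. For non-normality, note that $x^{\lambda_1}$ and $x^{\lambda_2}$ are eigenvectors for the distinct eigenvalues $u_1\ne u_2$, yet $\langle x^{\lambda_1},x^{\lambda_2}\rangle_{w,A}=\int_A x^{\lambda_1+\lambda_2}w\,dx\ge m\int_a^{r_A}x^{\lambda_1+\lambda_2}\,dx>0$; since eigenvectors of a normal operator attached to distinct eigenvalues are orthogonal, $T$ is not normal.

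At this stage $T$ is compact, injective, with nonzero simple eigenvalues $\{u_n\}$ and corresponding eigenvectors $\{x^{\lambda_n}\}=M_\Lambda$, and by Theorem \ref{hereditarycompleteness} the family $M_\Lambda$ is hereditarily complete in $[\overline{\text{span}}(M_{\Lambda})]_{w,A}$. Markus's criterion (\cite[Theorem 4.1]{Markus1970}, quoted above) therefore applies and $T$ admits spectral synthesis. For the final assertion, fix $0<\rho<1$ and $f\in[\overline{\text{span}}(M_{\Lambda})]_{w,A}$; by Theorem \ref{biorthogonalsystem}(II) its analytic extension satisfies $f(z)=\sum_n\langle f,r_n\rangle_{w,A}z^{\lambda_n}$ uniformly on compacta of $D_{r_A}$. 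For a.e.\ $x\in A$ one has $\rho x\in[0,\rho r_A]\subset[0,r_A)$, so $f(\rho x)=\sum_n\langle f,r_n\rangle_{w,A}\rho^{\lambda_n}x^{\lambda_n}$; with $\epsilon<r_A(1-\rho)$ this series is dominated on $[0,r_A]$ by $m_\epsilon\|f\|\,q^{\lambda_n}$, hence converges uniformly there and therefore in $L^2_w(A)$. Thus $T_\rho f=\sum_n\langle f,r_n\rangle_{w,A}\rho^{\lambda_n}x^{\lambda_n}$, which is precisely the operator $T$ built from $u_n:=\rho^{\lambda_n}$, a sequence of distinct nonzero reals with $|u_n|=\rho^{\lambda_n}$; hence $T_\rho$ inherits compactness, non-normality, and spectral synthesis.

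I expect the only real work to lie in two pieces of bookkeeping. The first is arranging $\epsilon$ in Step~1 so that the three exponential factors $r_A^{\lambda_n}$, $(r_A-\epsilon)^{-\lambda_n}$, and $\rho^{\lambda_n}$ collapse into a summable geometric-type series — this is exactly where the hypotheses $r_w=r_A$ (forced by $m\le w$) and $0<\rho<1$ are both essential, and it is the estimate that simultaneously gives boundedness and compactness. The second is upgrading the uniform-on-compacta analytic representation of Theorem \ref{biorthogonalsystem}(II) to genuine $L^2_w(A)$-convergence of the dilated series, so that $T_\rho$ is recognized as an honest instance of $T$. Everything else — completeness of $r_\Lambda$, hereditary completeness of $M_\Lambda$, and the equivalence of hereditary completeness with spectral synthesis — is supplied verbatim by the earlier results.
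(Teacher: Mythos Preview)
Your proposal is correct and follows essentially the same route as the paper's proof: establish boundedness and compactness via the geometric estimate on $\|r_n\|\,|u_n|\,\|x^{\lambda_n}\|$, identify the eigenstructure via biorthogonality, verify triviality of the kernel and simplicity of eigenvalues, rule out normality by non-orthogonality of the $x^{\lambda_n}$, and then invoke Markus's criterion together with the hereditary completeness of Theorem~\ref{hereditarycompleteness}. The only cosmetic difference is that the paper computes $T^*r_k=u_k r_k$ explicitly and argues through the adjoint, whereas you pair the $L^2_w(A)$-convergent series for $Tf$ directly with $r_m$; your treatment of $T_\rho$ via the analytic extension is also slightly more detailed than the paper's, which leaves that particular case to the reader.
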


Our final result in this article is on $Moment$ problems (see Theorem $\ref{MomentProblem}$). We note that when
$\{\lambda_n\}_{n=1}^{\infty}$ is a sequence satisfying $(\ref{LKcondition})-(\ref{gapcondition})$ or similar conditions,
and $\{r_n\}_{n=1}^{\infty}$ is the unique biorthogonal family to the system $\{e^{-\lambda_n x}\}_{n=1}^{\infty}$ in the closed span of the
latter in $L^2 (0, T)$, then searching for solutions $f$ to moment problems such as
\[
\int_{0}^{T} f(x)\cdot e^{-\lambda_n x} \, dx=
d_{n}\qquad \forall\,\, n\in\mathbb{N},
\]
is very useful in Control Theory for Partial Differential Equations (\cite{1971FR,2011JPA,2019JPA}),
starting with the pioneering work of Fattorini and Russell \cite{1971FR} and followed by a vast amount of work done after that.

Inspired by this we derived the following result.
\begin{theorem}\label{MomentProblem}
Let $A$  be a measurable subset of the real half-line $[0,\infty)$ of positive Lebesgue measure
and let $w$ be a  real-valued non-negative integrable function defined on $A$, with $r_w$ as in $(\ref{rw})$.
Let $\Lambda=\{\lambda_n\}_{n=1}^{\infty}$ be a sequence of distinct positive real numbers that
satisfies $(\ref{LKcondition})-(\ref{gapcondition})$.
Consider a sequence $\{d_n\}_{n=1}^{\infty}$ of real numbers such that
\begin{equation}\label{momentbound}
|d_n|=O(a^{\lambda_n})\qquad \text{for\,\, some}\qquad a\in [0, r_w).
\end{equation}
Then there exists a unique function $f\in\overline{\text{span}}(M_{\Lambda})$
in $L^2_w (A)$, serving as a solution to the moment problem
\begin{equation}\label{momentproblemquestion}
\int_{A} f(x)\cdot  x^{\lambda_n}\cdot w(x)\, dx=d_n, \qquad n=1,2,\cdots
\end{equation}
\end{theorem}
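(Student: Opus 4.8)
The plan is to write the solution down explicitly as a series in the biorthogonal family $r_{\Lambda}=\{r_n\}_{n=1}^{\infty}$ produced by Theorem $\ref{biorthogonalsystem}$, and to use the norm estimate $(\ref{rnkbound})$ together with the growth condition $(\ref{momentbound})$ to guarantee convergence. Concretely, I would set
\[
f:=\sum_{n=1}^{\infty} d_n\, r_n ,
\]
the natural candidate since, by biorthogonality $\langle r_n, x^{\lambda_m}\rangle_{w,A}=\delta_{nm}$, such an $f$ should satisfy $\langle f, x^{\lambda_m}\rangle_{w,A}=d_m$ once term-by-term evaluation is justified.

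\emph{Convergence and membership.} Since $a\in[0,r_w)$, I would fix $\epsilon\in(0,r_w-a)$, so that $q:=a/(r_w-\epsilon)\in[0,1)$. By $(\ref{momentbound})$ there is a constant $C>0$ with $|d_n|\le C\,a^{\lambda_n}$ for all $n$, and by $(\ref{rnkbound})$ we have $\|r_n\|_{L^2_w(A)}\le m_{\epsilon}(r_w-\epsilon)^{-\lambda_n}$; hence
\[
\|d_n\, r_n\|_{L^2_w(A)}\le C\, m_{\epsilon}\, q^{\lambda_n},\qquad n=1,2,\dots
\]
The gap condition $(\ref{gapcondition})$ gives $\lambda_n\ge \lambda_1+(n-1)\delta$ with $\delta:=\inf_n(\lambda_{n+1}-\lambda_n)>0$, so $q^{\lambda_n}\le q^{\lambda_1}(q^{\delta})^{\,n-1}$ and $\sum_{n=1}^{\infty}\|d_n r_n\|_{L^2_w(A)}<\infty$. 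As $L^2_w(A)$ is complete, the series defining $f$ converges (absolutely) in $L^2_w(A)$; and since each $r_n$ lies in the closed span of $M_{\Lambda}$ by Theorem $\ref{biorthogonalsystem}$, the sum $f$ lies in $\overline{\text{span}}(M_{\Lambda})$ as well.

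\emph{Verification of the moment conditions and uniqueness.} Because $e_m=x^{\lambda_m}\in L^2_w(A)$, the functional $g\mapsto\langle g, x^{\lambda_m}\rangle_{w,A}$ is continuous on $L^2_w(A)$, so I may evaluate term by term:
\[
\int_A f(x)\, x^{\lambda_m}\, w(x)\,dx=\sum_{n=1}^{\infty} d_n\,\langle r_n, x^{\lambda_m}\rangle_{w,A}=d_m ,
\]
which is exactly $(\ref{momentproblemquestion})$. (Theorem $\ref{biorthogonalsystem}(II)$ also identifies the analytic extension of $f$ in $D_{r_w}$ as $\sum_{n=1}^{\infty} d_n z^{\lambda_n}$, which one may record as a bonus.) For uniqueness, suppose $f_1,f_2\in\overline{\text{span}}(M_{\Lambda})$ both solve the moment problem. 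Then $g:=f_1-f_2\in\overline{\text{span}}(M_{\Lambda})$ and $\langle g, x^{\lambda_n}\rangle_{w,A}=0$ for every $n$, so $g\perp\text{span}(M_{\Lambda})$ and, by continuity of the inner product, $g\perp\overline{\text{span}}(M_{\Lambda})$. Since $g$ itself belongs to $\overline{\text{span}}(M_{\Lambda})$, we get $\langle g,g\rangle_{w,A}=0$, hence $g=0$ in $L^2_w(A)$ and $f_1=f_2$.

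The only delicate point is the geometric decay $\|d_n r_n\|_{L^2_w(A)}\le C m_\epsilon q^{\lambda_n}$ with $q<1$; once the sharp bound $(\ref{rnkbound})$ and the linear growth of $\lambda_n$ coming from $(\ref{gapcondition})$ are available, there is no real obstacle left, since the hard analytic work has already been carried out in the proof of Theorem $\ref{biorthogonalsystem}$.
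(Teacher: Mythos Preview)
Your argument is correct and in fact more direct than the paper's. The paper does not write down an explicit candidate; instead it rescales the two families, setting $U_n=\lambda_n d_n r_n$ and $V_n=x^{\lambda_n}/(\lambda_n d_n)$, proves via the bound $(\ref{rnkbound})$ that $\sum_{n,m}|\langle U_n,U_m\rangle_{w,A}|<\infty$, invokes a criterion of Christensen and Casazza et al.\ to conclude that $\{U_n\}$ is Bessel and hence $\{V_n\}$ is Riesz--Fischer, and then solves the auxiliary moment problem $\langle f,V_n\rangle_{w,A}=1/\lambda_n$ using $\sum 1/\lambda_n^2<\infty$. This yields existence abstractly. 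Your approach bypasses the Bessel/Riesz--Fischer machinery entirely: the same estimate $\|d_n r_n\|\le Cm_\epsilon q^{\lambda_n}$ with $q<1$ that the paper uses to control $\langle U_n,U_m\rangle$ already gives absolute convergence of $\sum d_n r_n$ in $L^2_w(A)$, and biorthogonality plus continuity of $\langle\,\cdot\,,e_m\rangle$ finishes. Your route also supplies the explicit formula $f=\sum d_n r_n$, handles the case where some $d_n$ vanish (the paper's rescaling tacitly assumes $d_n\neq0$), and actually proves the uniqueness asserted in the statement, which the paper's proof omits. The paper's approach, on the other hand, situates the result within the standard moment-problem framework and would generalise more readily to data $\{d_n\}$ satisfying weaker $\ell^2$-type conditions.
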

\begin{remark}
Certain concepts from Non-Harmonic Fourier series such as Bessel sequences and Riesz-Fischer sequences will be utilized
to prove the above result.
\end{remark}

The rest of this article is organized as follows:
in Section 3 we revisit Theorem $B$ and prove Lemma $\ref{revisiting}$ as well as some auxiliary result.
The proofs of Theorems $\ref{Distances}$, $\ref{biorthogonalsystem}$, $\ref{hereditarycompleteness}$, and $\ref{MomentProblem}$ are given respectively in
Sections 2, 4, 5, and 6. The proof of Theorem $\ref{spectralsynthesis}$ is given in the Appendix section since it is almost identical
to the one given for \cite[Theorem 4.1]{Zikkos2024JMAA}.

\section{Proof of Theorem $\ref{Distances}$ on Distances}
\setcounter{equation}{0}

Fix some small positive $\epsilon$ and let $\delta_{\epsilon}:=r_w-\frac{\epsilon}{2}$. By the definition of $r_w$ in $(\ref{rw})$,
we claim that there exists an $\alpha_{\epsilon}>0$, which depends on $\epsilon$, so that the set
\begin{equation}\label{Bepsilon}
B_{\epsilon}:=\{x\in A\cap (\delta_{\epsilon}, \infty): \, w(x)>\alpha_{\epsilon}\}
\end{equation}
has $\bf positive$ Lebesgue measure. Let us justify this argument by writing the set
\[
A\cap (\delta_{\epsilon}, \infty)
\]
as a countable union of subsets
\[
\{x\in A\cap (\delta_{\epsilon}, \infty): \, w(x)=0\}\cup \{x\in A\cap (\delta_{\epsilon}, \infty): \, w(x)>1\}\cup
\left(\bigcup_{n=1}^{\infty} \left\{x\in A\cap (\delta_{\epsilon}, \infty): \, \frac{1}{n+1}<w(x)\le \frac{1}{n}\right\}\right).
\]
If for each positive $\alpha$ the measure of the set $\{x\in A\cap (\delta_{\epsilon}, \infty): \, w(x)>\alpha\}$ is equal to zero, then the measure of the set $A\cap (\delta_{\epsilon}, \infty)$ will be equal to the measure of the set $\{x\in A\cap (\delta_{\epsilon}, \infty): \, w(x)=0\}$. This means that $\int_{A\cap (\delta_{\epsilon}, \infty)} w(t)\, dt=0$ which contradicts the definition of $r_w$ in $(\ref{rw})$.
Hence for some $\alpha_{\epsilon}>0$  there exists a set $B_{\epsilon}$ of positive Lebesgue measure
so that $w(x)>\alpha_{\epsilon}$ on $B_{\epsilon}$.

Then one has
\[
||f||_{L^p_w (A)}\ge ||f||_{L^p_w (B_{\epsilon})}\ge \alpha^{1/p}_{\epsilon}\cdot ||f||_{L^p (B_{\epsilon})}.
\]
Since $B_{\epsilon}\subset [\delta_{\epsilon}, r_w]$
it then follows from Theorem $A$, relation $(\ref{Erdelyi})$, that for all functions $f$ in the span of $M_{\Lambda}$,
there exists a positive constant $c_{\epsilon}$ which depends on $\epsilon$ and on the measure of the set $B_{\epsilon}$,
such that
\[
||f||_{L^p (B_{\epsilon})}\ge c_{\epsilon}\cdot ||f||_{[0,\delta_{\epsilon}]}.
\]
Choose any $a\in (0, \delta_{\epsilon})$. Clearly one has $||f||_{[0,\delta_{\epsilon}]}\ge ||f||_{[a,\delta_{\epsilon}]}$.

Then, combining all of the above shows that
\begin{equation}\label{alphac}
||f||_{L^p_w (A)}\ge \alpha^{1/p}_{\epsilon}\cdot c_{\epsilon}\cdot ||f||_{[a,\delta_{\epsilon}]}.
\end{equation}
Now, let $g$ belong to the span of the system $M_{\Lambda_n}$ and let $e_n(x)=x^{\lambda_n}$, thus $e_n-g$ belongs to the span of $M_{\Lambda}$.
Then from above we get
\begin{equation}\label{june16}
||e_n-g||_{L^p_w (A)}\ge \alpha^{1/p}_{\epsilon}\cdot c_{\epsilon}\cdot ||e_n-g||_{[a,\delta_{\epsilon}]}.
\end{equation}

Next, we recall a lower bound for $||e_n-g||_{[a,\delta_{\epsilon}]}$ due to
Luxemburg and Korevaar. They proved in \cite[Theorem 7.1 and relation (1.9)]{LK} that for every positive
$\eta$ there exists a positive constant $M_{\epsilon,\eta}$, which depends on $\eta$ and on the interval $[a,\delta_{\epsilon}]$,
hence on $\epsilon$, such that
\[
\inf_{g\in \overline{\text{span}} (M_{\Lambda_n})}||e_n-g||_{[a,\delta_{\epsilon}]}\ge M_{\epsilon, \eta}\cdot (\delta_{\epsilon}-\eta)^{\lambda_n}.
\]
For the fixed $\epsilon>0$, take $\eta=\epsilon/2$ and recall that $\delta_{\epsilon}=r_w-\epsilon/2$ hence $\delta_{\epsilon}-\epsilon/2=r_w-\epsilon$.
Thus there exists a constant $M_{\epsilon}$, such that
\[
\inf_{g\in \overline{\text{span}} (M_{\Lambda_n})}||e_n-g||_{[a,\delta_{\epsilon}]}\ge M_{\epsilon}\cdot
\left(\delta_{\epsilon}-\frac{\epsilon}{2}\right)^{\lambda_n}=M_{\epsilon}\cdot (r_w-\epsilon)^{\lambda_n}.
\]
Together with $(\ref{june16})$ shows that for all $g\in \text{span}(M_{\Lambda_n})$, we have
\[
 ||e_n-g||_{L^p_w (A)}\ge
\alpha^{1/p}_{\epsilon}\cdot c_{\epsilon}\cdot M_{\epsilon}\cdot (r_w-\epsilon)^{\lambda_n}.
\]
Finally, letting $m_{\epsilon}=\alpha^{1/p}_{\epsilon}\cdot c_{\epsilon}\cdot M_{\epsilon}$ gives us the distance result
$(\ref{distancelowerbounds})$, that is
\[
\inf_{g\in \overline{\text{span}}(M_{\Lambda_n})}||e_n-g||_{L^p_w (A)}\ge
m_{\epsilon}\cdot (r_w-\epsilon)^{\lambda_n}.
\]

\section{Proof of Lemma $\ref{revisiting}$ and some auxiliary result}
\setcounter{equation}{0}

In this section we first revisit Theorem B by proving Lemma $\ref{revisiting}$.

\subsection{Proof of Lemma $\ref{revisiting}$}

As in the proof of Theorem $\ref{Distances}$,
$\bf fix$ some small positive $\epsilon$ and let $\delta_{\epsilon}:=r_w-\frac{\epsilon}{2}$. Then
there exists an $\alpha_{\epsilon}>0$, which depends on $\epsilon$, so that the set
$B_{\epsilon}$ as in $(\ref{Bepsilon})$ has positive Lebesgue measure.
Since $||f-P_j||_{L^p_{w,A}}\to 0$ as $j\to\infty$ then $||f-P_j||_{L^p (B_{\epsilon})}\to 0$ as $j\to\infty$ as well.
Thus $\{P_j\}_{j=1}^{\infty}$ is a Cauchy sequence in $L^p (B_{\epsilon})$.
It follows by Theorem $A$ relation $(\ref{Erdelyi})$, that $\{P_j\}_{j=1}^{\infty}$ is uniformly Cauchy on $[0, \delta_{\epsilon}]$.
Hence there exists a function $g_{\delta_{\epsilon}}\in C[0, \delta_{\epsilon}]$, the space of continuous functions on $[0, \delta_{\epsilon}]$,
such that $|g_{\delta_{\epsilon}}(x)- P_j(x)|\to 0$ uniformly on $[0, \delta_{\epsilon}]$ as $j\to\infty$. Thus we also have
\begin{equation}\label{gdelta}
||g_{\delta_{\epsilon}} - P_j||_{L^p (0, \delta_{\epsilon})}\to 0 \quad \text{as}\quad j\to\infty,\qquad
\text{and}\qquad ||P_j||_{L^p (0, \delta_{\epsilon})} \to ||g_{\delta_{\epsilon}}||_{L^p (0, \delta_{\epsilon})}\quad\text{as}\quad j\to\infty.
\end{equation}

From the pre-mentioned result by Luxemburg and Korevaar \cite[Theorem 7.1 and relation (1.9)]{LK} it follows that for every positive
$\eta$ there exists a positive constant $M_{\eta}$, such that
\begin{equation}\label{LKresult}
\inf_{h\in \overline{\text{span}} (M_{\Lambda_n})}||e_n-h||_{L^p([0,\delta_{\epsilon}])}\ge M_{\eta}\cdot (\delta_{\epsilon}-\eta)^{\lambda_n}.
\end{equation}
Now, $P_j(x)=\sum_{n=1}^{p(j)} a_{j,n} x^{\lambda_n}$: fix $k\in\{1, 2, \dots, p(j)\}$ and write
\[
P_j(x)=a_{j,k}\cdot \left(x^{\lambda_k}+ \sum_{n=1,\, n\not=k}^{p(j)} \frac{a_{j,n}}{a_{j,k}} x^{\lambda_n} \right).
\]
Then let $H_{j,k}(x) : = \sum_{n=1, \, n\not=k}^{p(j)} \frac{a_{j,n}}{a_{j, k}} x^{\lambda_n}$
thus $H_{j, k}$ belongs to the space $M_{\Lambda_k}$ $(\ref{exceptone})$. Combined with $(\ref{LKresult})$, we get
\[
||P_j||_{L^p (0, \delta_{\epsilon})} = |a_{j,k}|\cdot ||e_k+H_{j,k}||_{L^p (0, \delta_{\epsilon})}\ge
|a_{j,k}|\cdot M_{\eta}\cdot (\delta_{\epsilon}-\eta)^{\lambda_k}.
\]
Letting  $m_{\eta} = 1/M_{\eta}$ gives
\begin{equation}\label{Cauchy2}
|a_{j,k}|\le m_{\eta}\cdot ||P_j||_{L^p (0, \delta_{\epsilon})}\cdot (\delta_{\epsilon}-\eta)^{-\lambda_n}.
\end{equation}
Similarly one gets
\begin{equation}\label{Cauchy1}
|a_{i,k}-a_{j,k}|\le m_{\eta}\cdot ||P_i-P_j||_{L^p (0, \delta_{\epsilon})}\cdot (\delta_{\epsilon}-\eta)^{-\lambda_n}.
\end{equation}
Now, it follows from $(\ref{gdelta})$ that $\{P_j\}_{j=1}^{\infty}$ is Cauchy in $L^p (0, \delta_{\epsilon})$.
Thus, from $(\ref{Cauchy1})$ we see that for every fixed $k\in\mathbb{N}$,
$\{a_{j,k}\}_{j=1}^{\infty}$ is a Cauchy sequence of real numbers, hence
\begin{equation}\label{limitlimit}
a_{j,k}\to a_k\qquad \text{for some}\quad a_k\in\mathbb{R}.
\end{equation}
Moreover, from $(\ref{gdelta})$ and $(\ref{Cauchy2})$ we get
\begin{equation}\label{ak}
|a_{k}|\le m_{\eta}\cdot ||g_{\delta_{\epsilon}}||_{L^p (0, \delta_{\epsilon})}\cdot (\delta_{\epsilon}-\eta)^{-\lambda_k}.
\end{equation}
If we fix the index $i$ in $(\ref{Cauchy1})$ and let the index $j\to\infty$, we then get from $(\ref{gdelta})$ and $(\ref{Cauchy1})$
\begin{equation}\label{ajk}
|a_{i,k}-a_{k}|\le m_{\eta}\cdot ||P_i-g_{\delta_{\epsilon}}||_{L^p (0, \delta_{\epsilon})}\cdot (\delta_{\epsilon}-\eta)^{-\lambda_k}.
\end{equation}

We will need the estimates $(\ref{ak})-(\ref{ajk})$ below. First of all, it follows from $(\ref{ak})$ that
\begin{equation}\label{an}
F_{\delta_{\epsilon}}(z) : = \sum_{n=1}^{\infty} a_n z^{\lambda_n}
\end{equation}
defines an analytic function in the slit disk
\[
D_{\delta_{\epsilon}}:=\{z\in\mathbb{C}\setminus (-\infty, 0]: \,\, |z|<\delta_{\epsilon}\}.
\]
converging uniformly on compact subsets of $D_{\delta_{\epsilon}}$.

We will now show below that
\begin{equation}\label{g=F}
F_{\delta_{\epsilon}}(x) = f(x)\qquad\text{almost everywhere}\quad \text{on}\quad A\cap [0, \delta_{\epsilon}).
\end{equation}

$\bf Fix$ any $\rho\in (0, \delta_{\epsilon})$ and choose $\eta>0$ such that $\delta_{\epsilon}-\eta>\rho$, thus $\rho/(\delta_{\epsilon}-\eta)<1$. Then the series
\[\sum_{n=1}^{\infty} (\delta_{\epsilon}-\eta)^{-\lambda_n}\cdot x^{\lambda_n}
=\sum_{n=1}^{\infty}\left(\frac{x}{\delta_{\epsilon}-\eta}\right)^{\lambda_n}
\]
converges uniformly on the interval $[0, \rho]$. Therefore,
\begin{equation}\label{unif1}
\sum_{n=p(j)+1}^{\infty} (\delta_{\epsilon}-\eta)^{-\lambda_n}\cdot  x^{\lambda_n}\to 0\qquad \text{uniformly on}\quad [0, \rho]\quad
\text{as}\quad  j\to\infty
\end{equation}
and there exists some $M>0$ so that
\begin{equation}\label{unif2}
\sum_{n=1}^{p(j)} (\delta_{\epsilon}-\eta)^{-\lambda_n}\cdot x^{\lambda_n}<M\qquad \text{for all}\quad x\in [0,\rho].
\end{equation}

Then for all $x\in [0,\rho]$ write
\begin{eqnarray*}
\left|F_{\delta_{\epsilon}}(x)-P_j(x)\right| & = & \left|\sum_{n=1}^{\infty} a_n x^{\lambda_n}-\sum_{n=1}^{p(j)} a_{j,n} x^{\lambda_n}\right|\\
& = & \left|\sum_{n=p(j)+1}^{\infty} a_n x^{\lambda_n}-\sum_{n=1}^{p(j)} (a_{j,n}-a_n) x^{\lambda_n}\right|\\
& \le & \sum_{n=p(j)+1}^{\infty} |a_n| x^{\lambda_n} + \sum_{n=1}^{p(j)} |a_{j,n}-a_n| x^{\lambda_n}.
\end{eqnarray*}
If we replace the upper bounds from $(\ref{ak})-(\ref{ajk})$ we get for all $x\in [0,\rho]$
\[
\left|F_{\delta_{\epsilon}}(x)-P_j(x)\right|\le
m_{\eta}\cdot ||g_{\delta_{\epsilon}}||_{L^p (0, \delta_{\epsilon})}\cdot
\sum_{n=p(j)+1}^{\infty} (\delta_{\epsilon}-\eta)^{-\lambda_n}\cdot  x^{\lambda_n} +
 m_{\eta}\cdot ||P_j-g_{\delta_{\epsilon}}||_{L^p (0, \delta_{\epsilon})}\cdot \sum_{n=1}^{p(j)} (\delta_{\epsilon}-\eta)^{-\lambda_n}\cdot x^{\lambda_n}.
\]
It then follows from $(\ref{gdelta})$, $(\ref{unif1})$ and $(\ref{unif2})$ that

\[
\left|F_{\delta_{\epsilon}}(x)-P_j(x)\right|\to 0\qquad\text{uniformly on}\quad [0, \rho]\quad\text{as}\quad j\to\infty.
\]
Thus for every positive $\tau>0$,
there exists some $j_{\tau}\in\mathbb{N}$ such that for all $j\ge j_{\tau}$ one has $|F_{\delta_{\epsilon}}(x)- P_j(x)|<\tau$
for all $x\in [0, \rho]$. Hence
\[
\int_{A\cap [0, \rho]} |F_{\delta_{\epsilon}}(x)- P_j(x)|^p\cdot w(x)\, dx\le \tau^p\cdot \int_A w(x)\, dx,
\qquad \text{for all}\quad j\ge j_{\tau}.
\]
Since $\int_A w(x)\, dx<\infty$ we get
\begin{equation}\label{aaa}
||F_{\delta_{\epsilon}}-P_j||_{L^p_{w, A\cap [0,\rho]}}\to 0\quad \text{as}\quad j\to\infty.
\end{equation}
But $||f-P_j||_{L^p_{w, A\cap [0,\rho]}}\to 0$ as $j\to\infty$ since $||f-P_j||_{L^p_{w, A}}\to 0$ as $j\to\infty$.
Combined together shows that
\[
||f-F_{\delta_{\epsilon}}||_{L^p_{w, A\cap [0,\rho]}}=0.
\]
Due to $(\ref{measurezero})$ we see that
\[
F_{\delta_{\epsilon}}(x)=f(x)\qquad\text{almost everywhere}\quad \text{on}\quad  A\cap [0, \rho].
\]
However, this is true for any fixed $\rho \in (0, \delta_{\epsilon})$. Hence we conclude that
\[
f(x)=F_{\delta_{\epsilon}}(x)\qquad \text{almost everywhere}\quad \text{on}\quad A\cap [0, \delta_{\epsilon}).
\]
In other words,
\begin{equation}\label{equal1l}
f(x)=\sum_{n=1}^{\infty} a_n x^{\lambda_n}\qquad \text{almost everywhere}\quad \text{on}\quad A\cap [0, \delta_{\epsilon}).
\end{equation}

Using the same rational, and repeating the above arguments, if we choose $\delta_1$ such that $r_w>\delta_1>\delta_{\epsilon}$, then
there is a function $F_{\delta_1}(x)$,
\begin{equation}\label{bn}
F_{\delta_1}(x)=\sum_{n=1}^{\infty} b_n x^{\lambda_n},\qquad b_n\in\mathbb{R},
\end{equation}
defining an analytic function in the slit disk
\[
D_{\delta_1}:=\{z\in\mathbb{C}\setminus (-\infty, 0]: \,\, |z|<\delta_1\},
\]
converging uniformly on compact subsets of $D_{\delta_1}$, and such that
\[
f(x)=F_{\delta_1}(x)\qquad \text{almost everywhere}\quad \text{on}\quad A\cap [0, \delta_1).
\]
Observe however, that due to the $\bf Uniqueness$ of limits, the limit in $(\ref{limitlimit})$ is unique.
Thus the coefficients $b_n$ in $(\ref{bn})$ are equal to the respective coefficients $a_n$ in $(\ref{an})$.
Hence, $F_{\delta_1}(x)$ is identical to $F_{\delta_{\epsilon}}(x)$. This implies that
$(\ref{analyticextension})$ is valid for all $z\in D_{r,w}$ (in the almost everywhere sense on $A$).

\subsection{An auxiliary result}

\begin{lemma}\label{span}
Suppose that $f\in\overline{\text{span}} (M_{\Lambda})$  in $L^p_w (A)$, $p\ge 1$,
thus by Theorem $B$ and Lemma $\ref{revisiting}$  we have $f(x)=\sum_{n=1}^{\infty} a_n x^{\lambda_n}$ almost everywhere on $A\cap [0, r_w)$.
Then, for any fixed element $x^{\lambda_n}$, the function
\[
f(x)-a_n x^{\lambda_n}
\]
belongs to the closed span of the system $M_{\Lambda_n}$ $(\ref{exceptone})$ in $L^p_w (A)$.
\end{lemma}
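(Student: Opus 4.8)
The plan is to exhibit, for each fixed index $n$, an explicit sequence of polynomials from $\mathrm{span}(M_{\Lambda_n})$ that converges to $f - a_n x^{\lambda_n}$ in $L^p_w(A)$. Start from the approximating sequence $P_j(x) = \sum_{k=1}^{p(j)} a_{j,k} x^{\lambda_k}$ with $\|f - P_j\|_{L^p_w(A)} \to 0$, furnished by the hypothesis that $f \in \overline{\mathrm{span}}(M_\Lambda)$. The natural candidate is to remove the $n$-th term from each $P_j$, i.e. set
\[
Q_j(x) := P_j(x) - a_{j,n} x^{\lambda_n} = \sum_{k=1,\, k\neq n}^{p(j)} a_{j,k} x^{\lambda_k},
\]
which lies in $\mathrm{span}(M_{\Lambda_n})$ for every $j$ large enough that $p(j) \ge n$. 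Then
\[
\bigl(f - a_n x^{\lambda_n}\bigr) - Q_j(x) = \bigl(f - P_j\bigr) + \bigl(a_{j,n} - a_n\bigr) x^{\lambda_n},
\]
so by the triangle inequality
\[
\bigl\| (f - a_n x^{\lambda_n}) - Q_j \bigr\|_{L^p_w(A)} \le \|f - P_j\|_{L^p_w(A)} + |a_{j,n} - a_n| \cdot \|x^{\lambda_n}\|_{L^p_w(A)}.
\]

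The first term tends to $0$ by hypothesis. For the second term I would invoke Lemma~\ref{revisiting}, specifically relation $(\ref{important})$, which gives exactly $a_{j,n} \to a_n$ as $j \to \infty$; and $\|x^{\lambda_n}\|_{L^p_w(A)}$ is a finite constant (finite because $A$ is bounded essentially by $r_A$ up to a null set, $w$ is integrable, and $x^{\lambda_n}$ is bounded on $[0, r_A]$ — or one can simply note $x^{\lambda_n} \in L^p_w(A)$ since it is the element $e_n$ of the system). Hence the right-hand side tends to $0$, which is precisely the statement that $f - a_n x^{\lambda_n}$ lies in the $L^p_w(A)$-closure of $\mathrm{span}(M_{\Lambda_n})$.

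There is essentially no hard obstacle here; the entire content has been front-loaded into Lemma~\ref{revisiting}, whose conclusion $(\ref{important})$ — that the individual coefficients $a_{j,n}$ of the approximants converge to the Taylor-type coefficients $a_n$ of the analytic extension — is exactly the ingredient that makes the term-by-term deletion legitimate. The only points requiring a word of care are: (i) ensuring $Q_j \in \mathrm{span}(M_{\Lambda_n})$, which holds for all $j$ with $p(j) \ge n$ and one may harmlessly relabel to start the sequence there, handling the case $a_{j,n} = 0$ (when $\lambda_n$ does not appear in $P_j$) by the convention that the corresponding coefficient is $0$, consistent with $(\ref{important})$; and (ii) confirming $\|e_n\|_{L^p_w(A)} < \infty$, which is immediate since $e_n \in M_\Lambda \subset L^p_w(A)$. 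I would present the argument in this order: recall the approximants and $(\ref{important})$, define $Q_j$, write the difference and apply the triangle inequality, then take $j \to \infty$.
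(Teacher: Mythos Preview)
Your proof is correct and follows essentially the same approach as the paper's: both arguments define $Q_j = P_j - a_{j,n}x^{\lambda_n}\in\mathrm{span}(M_{\Lambda_n})$, decompose $(f-a_n x^{\lambda_n})-Q_j=(f-P_j)+(a_{j,n}-a_n)x^{\lambda_n}$, and invoke relation~$(\ref{important})$ from Lemma~\ref{revisiting} to kill the second term. The only cosmetic difference is that the paper establishes $(a_{j,n}-a_n)x^{\lambda_n}\to 0$ in $L^p_w(A)$ by first noting uniform convergence on $[0,r_w]$ and then integrating against $w$, whereas you factor out the scalar and use $\|e_n\|_{L^p_w(A)}<\infty$ directly; the latter is slightly cleaner.
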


\begin{proof}
Since $f\in\overline{\text{span}} (M_{\Lambda})$  in $L^p_w (A)$, then a sequence $\{P_j(x)\}_{j=1}^{\infty}$ in $\text{span} (M_{\Lambda})$
exists, where $P_j(x)=\sum_{n=1}^{p(j)} a_{j,n}x^{\lambda_n}$,
such that $||f-P_j||_{L^p_w (A)}\to 0$ as $j\to\infty$. By Theorem $B$ and Lemma $\ref{revisiting}$, $f$ is extended analytically
in the slit disk $D_{r_w}$ $(\ref{slit})$ such that $f(x)=\sum_{n=1}^{\infty} a_n x^{\lambda_n}$
for almost all $x\in A\cap [0,r_w]$ and $a_n=\lim_{j\to\infty} a_{j,n}$ (see relation $(\ref{important})$).
Hence, for every fixed $n\in\mathbb{N}$ we have
\[
(a_{j,n}-a_{n}) x^{\lambda_n}\to 0\quad as\quad j\to \infty,\qquad
\text{uniformly on}\quad [0,r_w].
\]
That is, for any $\epsilon>0$ there exists some $j_{\epsilon}\in\mathbb{N}$ so that for all $j>j_{\epsilon}$ and for all
$x\in [0,r_w]$ one has $\left|(a_{j,n}-a_{n}) x^{\lambda_n }\right|^p<\epsilon$. Therefore for all $j>j_{\epsilon}$ we have
\[
\int_{A\cap [0,r_w]}\left|(a_{j,n}-a_{n}) x^{\lambda_n }\right|^p\cdot w(x)\, dx<\epsilon \int_{A\cap [0,r_w]} w(x)\, dx.
\]
Thus
\[
\lim_{j\to\infty}\int_{A\cap [0,r_w]}\left|(a_{j,n}-a_{n}) x^{\lambda_n }\right|^p\cdot w(x)\, dx=0.
\]
Clearly one also has
\[
\lim_{j\to\infty}\int_{A}\left|(a_{j,n}-a_{n}) x^{\lambda_n }\right|^p\cdot w(x)\, dx=0.
\]
But $||f-P_j||_{L^p_w (A)}\to 0$ as $j\to\infty$. Combined together and applying the Minkowski inequality shows that
\[
\lim_{j\to\infty}\int_{A}\left|\left[ f(x)-P_j(x)\right] -
\left[a_n x^{\lambda_n} - a_{j,n} x^{\lambda_n}\right]\right|^p\cdot w(x)\, dx=0.
\]
We rewrite this as
\begin{equation}\label{final}
\lim_{j\to\infty}\int_{A}\left|\left[ f(x)-a_n x^{\lambda_n}\right] -
\left[P_j(x)- a_{j,n} x^{\lambda_n}\right]\right|^p\cdot w(x)\, dx=0.
\end{equation}

Clearly the function $P_j(x)- a_{j,n} x^{\lambda_n}$
belongs to the span of the system $M_{\Lambda_n}$ $(\ref{exceptone})$. We then conclude from $(\ref{final})$ that the function
$ f(x)-a_n x^{\lambda_n}$ belongs to the closed span of the system $M_{\Lambda_n}$ in $L^p_w(A)$.

\end{proof}

\section{Proof of Theorem $\ref{biorthogonalsystem}$}
\setcounter{equation}{0}

In this section we prove Theorem $\ref{biorthogonalsystem}$ on the existence of a biorthogonal family $r_{\Lambda}$ to the system $M_{\Lambda}$
and the Fourier-type series representations $(\ref{representationf})$ for functions in the closed span of $M_{\Lambda}$ in $L^2 _w (A)$.
We also show that $M_{\Lambda}$ is a Markushevich basis for its closed span in $L^2_w(A)$

\subsection{Constructing the Biorthogonal family and deriving the upper bound $(\ref{rnkbound})$}

In $(\ref{distancelowerbounds})$ we derived a positive lower bound for $D_{A,w,2,n}$ which is the distance
between a function $x^{\lambda_n}$ and the closed span of the system $M_{\Lambda_n}$ $(\ref{exceptone})$ in $L^2_w(A)$.
Since $L^2_w(A)$ is a separable Hilbert space,
it then follows from the $Closest\,\, Point\,\, Theorem$
that there exists a unique element in $\overline{\text{span}}(M_{\Lambda_{n}})$ in $L^2_w(A)$,
that we denote by $\phi_{n}$, so that
\[
||e_{n}-\phi_{n}||_{L^2_w (A)}=
\inf_{g\in \overline{\text{span}}(M_{\Lambda_{n}})}||e_{n,k}-g||_{L^2_w (A)}=D_{A,w,2,n}.
\]
The function $e_{n}-\phi_{n}$ is orthogonal to all the elements of the closed span of $M_{\Lambda_{n}}$
in $L^2_w (A)$, hence to $\phi_{n}$ itself.
Therefore
\[
\langle e_{n}-\phi_{n}, e_{n}-\phi_{n} \rangle_{w,A}=\langle e_{n}-\phi_{n}, e_{n}\rangle_{w,A}.
\]
Hence
\[
(D_{A,w,2,n})^2=\langle e_{n}-\phi_{n}, e_{n}\rangle_{w,A}.
\]
Next, we define
\[
r_{n}(x):=\frac{e_{n}(x)-\phi_{n}(x)}{(D_{A,w,2,n}) ^2}.
\]
It then follows that $\langle r_{n}, e_{n}\rangle_{w,A}=1$ and $r_{n}$ is orthogonal to all the
elements of the system $M_{\Lambda_{n}}$.
Thus $\{r_{n}:\,\, n\in\mathbb{N}\}$ is biorthogonal  to the system $M_{\Lambda}$ in $L^2_w (A)$.
Since $\phi_{n}\in\overline{\text{span}}(M_{\Lambda_{n}})$ in $L^2_w (A)$ then
$r_{n}\in\overline{\text{span}}(M_{\Lambda})$ in $L^2_w (A)$.

\begin{remark}
Clearly we have $||r_{n}||_{L^2_w (A)}=\frac{1}{D_{A,w,2,n}}$ thus $(\ref{rnkbound})$ follows from $(\ref{distancelowerbounds})$.
\end{remark}

Next we show that $\{r_{n}\}$ is the unique biorthogonal sequence to the system $M_{\Lambda}$,
which belongs to its closed span in $L^2_w (A)$. Indeed, if there is another such biorthogonal sequence,
call it $\{q_{n}\}$, then for all $n\in\mathbb{N}$ we have
\[
\langle r_{n}-q_{n}, e_{m}\rangle_{w,A}=0, \qquad \forall\,\, m\in\mathbb{N}.
\]
But this in turn implies that $r_{n}-q_{n}=0$ almost everywhere on $A$ since the system $M_{\Lambda}$
is complete in its closed span in $L^2_w (A)$.

\subsection{The Fourier-type series representations $(\ref{representationf})$}

By Theorem $B$, a function $f\in\overline{\text{span}} (M_{\Lambda})$ in $L^2_w (A)$
extends analytically in the slit disk $D_{r_w}$ $(\ref{slit})$ and
$f(x)=\sum_{n=1}^{\infty} a_n x^{\lambda_n}$ almost everywhere on $A\cap [0, r_w)$.
To obtain $(\ref{representationf})$ we will show that
\begin{equation}\label{coefcnk}
\langle f, r_{n} \rangle_{w,A} = a_n.
\end{equation}

Due to biorthogonality we get
\begin{eqnarray}
\langle f, r_{n} \rangle_{w,A} & = & \int_{A} f(x)\cdot r_n(x)\cdot w(x)\, dx \nonumber\\
& = & \int_{A} r_{n}(x) \cdot a_n x^{\lambda_n}\cdot w(x) \, dx +
\int_{A} r_{n}(x) \cdot \left[f(x)-a_n x^{\lambda_n}\right]\cdot w(x) \, dx\nonumber\\
& = & a_n +
\int_{A} r_{n}(x)\cdot \left[f(x)-a_n x^{\lambda_n}\right]\cdot w(x) \, dx.\label{cnk}
\end{eqnarray}

Now, it follows from Lemma $\ref{span}$ that the function $f_n(x):=f(x)-a_n x^{\lambda_n}$ belongs
to the closed span of the system $M_{\Lambda_n}$ $(\ref{exceptone})$
in $L^2_w(A)$. Hence, for every $\epsilon>0$, there is a function $g_{\epsilon}$ in the span of $M_{\Lambda_n}$
so that $||f_n-g_{\epsilon}||_{L^2_w(A)}<\epsilon$. Due to the biorthogonality we have
\[
\int_{A} r_n(x)\cdot g_{\epsilon}(x)\cdot w(x)\, dx=0.
\]
Combining with the Cauchy-Schwarz inequality we get
\begin{eqnarray*}
\left|\int_{A} r_n(x)\cdot f_n(x)\cdot w(x)\, dx\right|
& = & \left|\int_{A} r_n(x)\cdot \left[f_n(x)-g_{\epsilon}(x)\right]\cdot w(x)\, dx\right|\\
& = & \left|\int_{A} \left(r_n(x)\cdot \sqrt{w(x)}\right)\cdot \left(\left[f_n (x)-g_{\epsilon}(x)\right]\cdot \sqrt{w(x)}\right)\, dx\right|\\
& \le & \epsilon\cdot ||r_n||_{L^2_w(A)}.
\end{eqnarray*}
The arbitrary choice of $\epsilon$ implies that $\int_{A} r_n(x)\cdot f_n(x)\cdot w(x)\, dx=0$, that is
\[
\int_{A} r_n(x)\cdot \left[f(x)-a_n x^{\lambda_n}\right]\cdot w(x)\, dx=0.
\]
Replacing in $(\ref{cnk})$ shows that $(\ref{coefcnk})$ holds.

\subsection{The associated function $f^*$ of $f\in L^2_w (A)$}

Clearly $f(x)$ can be written uniquely as
\[
f(x)=g(x)+h(x)
\]
where

(I) $g$ belongs to the closed span of the system $M_{\Lambda}$ in $L^2_w (A)$, and

(II) $h$ belongs to the orthogonal complement of the closed span of the system $M_{\Lambda}$ in $L^2_w (A)$,
thus $\langle h, r_n\rangle_{w, A}=0$ for all $r_n\in r_{\Lambda}$.

Since $h=f-g$, we then have
\[
\langle g, r_n\rangle_{w, A}=\langle f, r_n\rangle_{w, A}\qquad \text{for all}\quad n\in\mathbb{N}.
\]
It follows from $(\ref{representationf})$ that
\[
g(x)=\sum_{n=1}^{\infty}\langle g, r_n \rangle_{w, A} \cdot x^{\lambda_n}\qquad \text{almost everywhere on}\quad A.
\]
Combining the above shows that
\[
f^*(z)=\sum_{n=1}^{\infty}\langle f, r_n \rangle_{w, A}\cdot  z^{\lambda_n}
\]
belongs to the closed span of $M_{\Lambda}$ in $L^2_w (A)$.

\subsection{The system $M_{\Lambda}$ is a Markushevich basis for its closed span in $L^2_w(A)$}

We have to show that
\[
\overline{\text{span}}(r_{\Lambda})=\overline{\text{span}}(M_{\Lambda})\quad \text{in}\quad L^2_w(A).
\]

Let us denote $\overline{\text{span}}(M_{\Lambda})$ in $L^2_w (A)$  by  $[\overline{\text{span}}(M_{\Lambda})]_{w, A}$  and let
$[\overline{\text{span}}(r_{\Lambda})]_{w, A}$ be the closed span of $r_{\Lambda}$ in $L^2_w (A)$.
Obviously $[\overline{\text{span}}(r_{\Lambda})]_{w, A}$ is a subspace of $[\overline{\text{span}}(M_{\Lambda})]_{w, A}$.
Let $[\overline{\text{span}}(r_{\Lambda})]_{w, A}^{\perp}$ be the orthogonal complement  of $[\overline{\text{span}}(r_{\Lambda})]_{w, A}$ in $[\overline{\text{span}}(M_{\Lambda})]_{w, A}$, that is
\[
[\overline{\text{span}}(r_{\Lambda})]_{w, A}^{\perp}=\{f\in [\overline{\text{span}}(M_{\Lambda})]_{w, A}:\,\, \langle f, g \rangle_{w,A}
=0 \quad for\,\, all\,\, g\in [\overline{\text{span}}(r_{\Lambda})]_{w, A}\}.
\]
Now, if $f\in [\overline{\text{span}}(M_{\Lambda})]_{w, A}$ then as shown earlier
\[
f(x)=\sum_{n=1}^{\infty}\langle f, r_{n} \rangle_{w,A} x^{\lambda_n}, \quad \text{almost everywhere on}\,\, A\cap [0,r_w].
\]
But if $f\in [\overline{\text{span}}(r_{\Lambda})]_{w, A}^{\perp}\subset [\overline{\text{span}}(M_{\Lambda})]_{w, A}$
then $\langle f, r_{n} \rangle_{w,A} =0$ for all $r_{n}\in r_{\Lambda}$.
We conclude that $f=0$ almost everywhere on $A\cap [0,r_w]$, hence $[\overline{\text{span}}(r_{\Lambda})]_{w, A}^{\perp}$
contains just the zero function.
Therefore $[\overline{\text{span}}(r_{\Lambda})]_{w, A}=[\overline{\text{span}}(M_{\Lambda})]_{w, A}$.

This completes the proof of Theorem $\ref{biorthogonalsystem}$.

\section{Proof of Theorem $\ref{hereditarycompleteness}$ on Hereditary completeness}
\setcounter{equation}{0}

Let us write the set $\mathbb{N}$ as an arbitrary disjoint union of two sets $N_1$ and $N_2$.
In order to obtain the hereditary completeness of $M_{\Lambda}$ in its closed span in $L^2 (A)$,
we must show that the closed span of the mixed system
\[
M_{1,2}:=\{e_{n}:\,\, n\in N_1\}\cup\{r_{n}:\,\, n\in N_2\},
\]
in $L^2_w (A)$ is equal to $\overline{\text{span}}(M_{\Lambda})$ in $L^2_w (A)$, which it was denoted earlier by
$[\overline{\text{span}}(M_{\Lambda})]_{w, A}$.
Denote by $W_{\Lambda_{1,2}}$ the closed span of $M_{1,2}$ in $L^2_w (A)$.
Obviously $W_{\Lambda_{1,2}}$ is a subspace of $[\overline{\text{span}}(M_{\Lambda})]_{w, A}$.
Let $W^{\perp}_{\Lambda_{1,2}}$ be the orthogonal complement of $W_{\Lambda_{1,2}}$ in $[\overline{\text{span}}(M_{\Lambda})]_{w, A}$,
that is
\[
W^{\perp}_{\Lambda_{1,2}}=\{f\in [\overline{\text{span}}(M_{\Lambda})]_{w, A}:\,\, \langle f, g \rangle_{w, A} =
0\quad \text{for\,\, all}\,\, g\in W_{\Lambda_{1,2}}\}.
\]
Now, if $f\in W^{\perp}_{\Lambda_{1,2}}\subset [\overline{\text{span}}(M_{\Lambda})]_{w, A}$, then by $(\ref{representationf})$ we have
\[
f(z)=\sum_{n=1}^{\infty} \langle f, r_{n} \rangle_{w, A} \cdot z^{\lambda_n},\quad z\in D_{r_w}
\]
with the series converging uniformly on the slit disk $D_{r_w}$ $(\ref{slit})$,
and of course $\langle f, r_{n} \rangle_{w, A} =0$ for all $n\in N_2$.
Thus,
\begin{equation}\label{converse}
f(x)=\sum_{n\in N_1}\langle f, r_{n} \rangle_{w, A} \cdot x^{\lambda_n},
\quad \text{almost everywhere on}\,\, [0,r_w).
\end{equation}

\begin{remark}
Since $w$ satisfies $m\le w(x)\le M$ on $A$ for some positive numbers $m, \, M$, then by Remark $\ref{equalr}$ we have $r_A=r_w$.
\end{remark}

Since $f\in L^2_w (A)$ and by assumption for some $a\in [0, r_A)$ the interval $[a,r_A]$ is a subset of $A$, then $f\in L^2_w ([a,r_A])$.
Then the inequality $m\le w(x)$ implies that $f\in L^2 (a,r_A)$. Moreover, one has $f\in L^2 (0,r_A)$ as well
since $f$ is continuous on $[0,r_A)$ due to the analytic extension of $f$ to the disk $D_{r_A}$.

To this end we point out that there is a $\bf converse$ result (see \cite[Corollary 6.2.4]{Gurariy}, \cite[Theorem 8.2]{LK}, \cite{Korevaar1947})
to the ``Clarkson-Erd\H{o}s-Schwartz Phenomenon'' which reads as follows.

``If $g\in L^2 (0,1)$ and $g(x)=\sum_{n=1}^{\infty} a_x \cdot x^{\lambda_n}$ on $(0,1)$, then
$g$ belongs to the closed span of $\{x^{\lambda_n}\}_{n=1}^{\infty}$ in $L^2 (0,1)$''.

Therefore, since $f$ as in $(\ref{converse})$ belongs to $L^2 (0,r_A)$, it follows that
$f$ belongs to the closed span of $\{e_n\}_{n\in N_1}$
in the space $L^2 (0,r_A)$. Clearly now $f$ belongs to the closed span of $\{e_n\}_{n\in N_1}$
in $L^2 (A)$ as well. Finally, from the inequality $w(x)\le M$ on $A$ we deduce that $f$ belongs to the closed span of $\{e_n\}_{n\in N_1}$
in $L^2_w (A)$ also.

Hence, for every $\epsilon>0$ there is a function $g_{\epsilon}$ in $\text{span}(\{e_n\}_{n\in N_1}$ so that
$||f-g_{\epsilon}||_{L^2_w(A)}<\epsilon$. Write
\[
\langle f, f \rangle_{w, A} = \langle f, f-g_{\epsilon} \rangle_{w, A} + \langle f, g_{\epsilon} \rangle_{w, A}.
\]
Since $f\in W^{\perp}_{\Lambda_{1,2}}$ then $\langle f, e_{n} \rangle_{w, A} =0$ for all $n\in N_1$, thus
$\langle f, g_{\epsilon} \rangle_{w, A}=0$. Therefore,

\[
||f||^2_{L^2_w (A)}=\langle f, f \rangle_{w, A} = \langle f, f-g_{\epsilon}\rangle_{w, A} \le ||f||_{L^2_{w, A}}
\cdot ||f-g_{\epsilon}||_{L^2_{w, A}}
\le ||f||_{L^2_{w, A}}\cdot \epsilon.
\]
Hence
\[
||f||_{L^2_w (A)}\le \epsilon.
\]
This holds for every $\epsilon>0$ thus $f(x)=0$ almost everywhere on $A$, hence $W^{\perp}_{\Lambda_{1,2}}=\{\mathbf{0}\}$.
Thus $W_{\Lambda_{1,2}}=[\overline{\text{span}}(M_{\Lambda})]_{w, A}$, meaning that the system $M_{\Lambda}$
is hereditarily complete in the space $L^2_w (A)$.

The proof of Theorem $\ref{hereditarycompleteness}$ is now complete.

\section{Proof of Theorem $\ref{MomentProblem}$ on the Moment problem}
\setcounter{equation}{0}

In order to prove Theorem $\ref{MomentProblem}$, we first introduce some concepts from Non-Harmonic Fourier Series
such as Bessel sequences and Riesz-Fischer sequences.

\subsection{Bessel sequences and Riesz-Fischer sequences}

Let $\cal{H}$ be a separable Hilbert space endowed with an inner product $\langle \cdot\, , \cdot \rangle$, and consider a sequence $\{f_n\}_{n=1}^{\infty}\subset \cal{H}$. We say that (see \cite[p. 128 Definition]{Young}):

(i) $\{f_n\}_{n=1}^{\infty}$ is a $\bf Bessel$ sequence if there exists a constant $B>0$ such that
$\sum_{n=1}^{\infty}|\langle f,f_n\rangle|^2<B||f||^2$ for all $f\in \cal{H}$.

(ii) $\{f_n\}_{n=1}^{\infty}$ is a $\bf{Riesz-Fischer}$ sequence
if the moment problem $\langle f, f_n\rangle =c_n$ has at least one solution in $\cal{H}$
for every sequence $\{c_n\}$ in the space $l^2(\mathbb{N})$.

The following result stated by Casazza et al.
is an interesting connection between Bessel and Riesz-Fischer sequences.

\begin{propa}\label{casazza}\cite[Proposition 2.3, (ii)]{Casazza}

The Riesz-Fischer sequences in $\cal{H}$ are precisely the families for which a biorthogonal Bessel sequence exists.
In other words

(a) Suppose that two sequences $\{f_n\}_{n=1}^{\infty}$ and $\{g_n\}_{n=1}^{\infty}$ in $\cal{H}$
are biorthogonal. Suppose also that $\{f_n\}_{n=1}^{\infty}$ is a Bessel sequence.
Then $\{g_n\}_{n=1}^{\infty}$ is a Riesz-Fischer sequence.

(b) If $\{f_n\}_{n=1}^{\infty}$ in $\cal{H}$ is a Riesz-Fischer sequence,
then there exists a biorthogonal Bessel sequence $\{g_n\}_{n=1}^{\infty}$.

\end{propa}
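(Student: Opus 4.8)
The plan is to establish the two implications (a) and (b) separately; together they give that the Riesz-Fischer sequences are exactly those admitting a biorthogonal Bessel sequence. Throughout I would work with the two operators naturally attached to a sequence $\{h_n\}_{n=1}^{\infty}\subset\cal{H}$: the \emph{analysis operator} $U_h f:=\{\langle f,h_n\rangle\}_{n=1}^{\infty}$ and the \emph{synthesis operator} $T_h\{c_n\}:=\sum_{n=1}^{\infty} c_n h_n$. The one standard fact I would invoke is that $\{h_n\}$ is a Bessel sequence with bound $B$ precisely when $U_h$ maps $\cal{H}$ boundedly into $l^2(\mathbb{N})$, equivalently when $T_h$ is a bounded operator on $l^2(\mathbb{N})$ with $||T_h||\le\sqrt{B}$, in which case $U_h^{*}=T_h$ and $T_h\{c_n\}$ converges in $\cal{H}$.

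For part (a), the argument is short once the synthesis operator is available. Since $\{f_n\}$ is Bessel, $T_f\{c_n\}=\sum_{n} c_n f_n$ converges in $\cal{H}$ for every $\{c_n\}\in l^2(\mathbb{N})$. Fixing such a $\{c_n\}$ I would set $f:=\sum_{n} c_n f_n$, and then use the biorthogonality relation $\langle f_n,g_m\rangle=\delta_{nm}$ together with continuity of the inner product to obtain $\langle f,g_m\rangle=\sum_{n} c_n\langle f_n,g_m\rangle=c_m$ for every $m$. Thus $f$ solves the moment problem $\langle\,\cdot\,,g_m\rangle=c_m$; since $\{c_n\}$ was an arbitrary element of $l^2(\mathbb{N})$, this is exactly the statement that $\{g_n\}$ is Riesz-Fischer.

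The core of the proposition is part (b), which I regard as the main obstacle. Assuming $\{f_n\}$ is Riesz-Fischer, let $V:=\overline{\text{span}}\{f_n\}$. The first step is to observe that for each $\{c_n\}\in l^2(\mathbb{N})$ the moment problem $\langle f,f_n\rangle=c_n$ has a \emph{unique} solution lying in $V$: existence follows by projecting any Riesz-Fischer solution onto $V$ (which leaves the moments unchanged, since each $f_n\in V$), and uniqueness holds because the difference of two solutions in $V$ is orthogonal to every $f_n$, hence to all of $V$, hence is $0$. This produces a well-defined map $R:l^2(\mathbb{N})\to V$ sending $\{c_n\}$ to its solution in $V$, and the uniqueness just noted makes $R$ linear. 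The delicate point is the boundedness of $R$, which I would obtain from the closed graph theorem: if $c^{(k)}\to c$ in $l^2(\mathbb{N})$ and $Rc^{(k)}\to h$ in $\cal{H}$, then $h\in V$ and, for each fixed $n$, $\langle h,f_n\rangle=\lim_{k}\langle Rc^{(k)},f_n\rangle=\lim_{k}c^{(k)}_n=c_n$, so $h$ is the solution in $V$, i.e. $h=Rc$; hence the graph of $R$ is closed and $R$ is bounded.

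It then remains to read off the biorthogonal Bessel sequence from $R$. Writing $\delta^{(n)}:=(\delta_{nm})_{m=1}^{\infty}$ for the $n$-th coordinate vector of $l^2(\mathbb{N})$, I would set $g_n:=R\delta^{(n)}\in V$; by construction $\langle g_n,f_m\rangle=\delta_{nm}$, so $\{g_n\}$ is biorthogonal to $\{f_n\}$. For the Bessel bound I would pass to the adjoint $R^{*}:\cal{H}\to l^2(\mathbb{N})$: for every $f\in\cal{H}$ one has $(R^{*}f)_n=\langle f,R\delta^{(n)}\rangle=\langle f,g_n\rangle$, so that $\sum_{n}|\langle f,g_n\rangle|^2=||R^{*}f||^2_{l^2}\le ||R||^2\cdot||f||^2$, showing $\{g_n\}$ is Bessel with bound $||R||^2$. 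Combining (a) and (b) yields the stated characterization, and I expect that essentially all of the work sits in the boundedness of the solution operator $R$, with the remaining steps being routine manipulations with the analysis and synthesis operators and with biorthogonality.
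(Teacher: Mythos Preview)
Your proof is correct in both directions and is the standard argument for this characterization. Note, however, that the paper does not supply its own proof of Proposition~A: the result is quoted verbatim from \cite[Proposition~2.3,~(ii)]{Casazza} and used as a black box (together with Lemma~\ref{besselriesz}) in the proof of Theorem~\ref{MomentProblem}. So there is no in-paper argument to compare against; what you have written is precisely a proof of the cited external result, and each step---the synthesis-operator solution of the moment problem in~(a), and the closed-graph boundedness of the solution operator $R$ followed by the adjoint computation in~(b)---is sound.
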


And now a sufficient condition so that two biorthogonal families in $\cal{H}$ are Bessel and Riesz-Fischer sequences.
The result follows from \cite[Proposition 3.5.4]{Christensen} and Proposition $\bf A$.
\begin{lemma}\label{besselriesz}
Consider two biorthogonal sequences $\{u_n\}_{n=1}^{\infty}$ and $\{v_n\}_{n=1}^{\infty}$ in $\cal{H}$ and
suppose there is some $M>0$ so that
\[
\sum_{m=1}^{\infty} |\langle v_n, v_m\rangle  |<M\qquad \text{for\,\, all}\quad n=1,2,3,\dots.
\]
Then $\{v_n\}_{n=1}^{\infty}$ is a Bessel sequence in $\cal{H}$ and
$\{u_n\}_{n=1}^{\infty}$ is a Riesz-Fischer sequence in $\cal{H}$.
\end{lemma}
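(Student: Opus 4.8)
The plan is to deduce the statement from the two results quoted just above it, namely \cite[Proposition 3.5.4]{Christensen} and Proposition $\mathbf{A}$; no new ideas are needed beyond assembling these.

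First I would establish that $\{v_n\}_{n=1}^{\infty}$ is a Bessel sequence in $\mathcal{H}$. The hypothesis $\sum_{m=1}^{\infty}|\langle v_n, v_m\rangle|<M$ for every $n$ says precisely that every row sum of the Gram matrix $\big(\langle v_m, v_n\rangle\big)_{m,n}$ is bounded by $M$, and since $|\langle v_m, v_n\rangle|=|\langle v_n, v_m\rangle|$ the column sums obey the same bound. This is exactly the hypothesis of \cite[Proposition 3.5.4]{Christensen}, whose conclusion is that $\{v_n\}_{n=1}^{\infty}$ is a Bessel sequence (with Bessel bound $M$). Concretely, that proposition is the Schur test applied to the synthesis operator $T\colon \ell^2(\mathbb{N})\to\mathcal{H}$, $T(\{c_n\})=\sum_n c_n v_n$: the bound on the Gram matrix $T^{*}T$ forces $\|T\|^2\le M$, hence $\sum_n|\langle f,v_n\rangle|^2=\|T^{*}f\|^2\le M\|f\|^2$ for all $f\in\mathcal{H}$.

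Next I would apply Proposition $\mathbf{A}$, part (a), to the pair $(f_n,g_n)=(v_n,u_n)$: the two sequences are biorthogonal by assumption and $\{v_n\}_{n=1}^{\infty}=\{f_n\}_{n=1}^{\infty}$ has just been shown to be Bessel, so the proposition yields that $\{g_n\}_{n=1}^{\infty}=\{u_n\}_{n=1}^{\infty}$ is a Riesz--Fischer sequence in $\mathcal{H}$. Biorthogonality is symmetric in the two families, so taking the roles of $u_n$ and $v_n$ in this order requires no adjustment. This proves both assertions.

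There is no serious obstacle once the two cited results are in hand; the only point needing a little care is to verify that the hypothesis of \cite[Proposition 3.5.4]{Christensen} is met in the two-sided (row and column) form its proof uses, which is immediate from $|\langle v_m,v_n\rangle|=|\langle v_n,v_m\rangle|$. Were one to want a self-contained argument, the single nontrivial ingredient would be the Schur-test estimate $\|T^{*}T\|_{\ell^2\to\ell^2}\le \sup_n\sum_m|\langle v_n,v_m\rangle|$ sketched above, everything else being bookkeeping about biorthogonality.
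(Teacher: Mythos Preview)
Your proposal is correct and follows exactly the route indicated in the paper, which simply states that the lemma follows from \cite[Proposition 3.5.4]{Christensen} and Proposition~A. Your write-up merely unpacks those two citations (Schur-type bound on the Gram matrix to get the Bessel property, then Proposition~A(a) for the Riesz--Fischer conclusion), so there is nothing to add or correct.
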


\subsection{Proof of Theorem $\ref{MomentProblem}$}

As before, let $[\overline{\text{span}}(M_{\Lambda})]_{w,A}$ be the closed  span of $M_{\Lambda}$ in $L^2_w (A)$.
Let $\{d_{n}:\,\, n\in\mathbb{N}\}$ be the sequence of non-zero real numbers that satisfies
$(\ref{momentbound})$. Then, for every $n\in\mathbb{N}$ define
\[
U_{n}(t):=\lambda_n d_{n}r_{n}(t)\qquad\text{and}\qquad V_{n}(t):=\frac{x^{\lambda_n}}{\lambda_n d_{n}}.
\]
Now, it easily follows that the sets
\[
\{U_{n}:\,\, n\in\mathbb{N}\}\quad\text{and}\quad\{V_{n}:\,\, n\in\mathbb{N}\}
\]
are biorthogonal in $[\overline{\text{span}}(M_{\Lambda})]_{w,A}$.\\

We will show below that $\{U_{n}\}_{n=1}^{\infty}$ and $\{V_{n}\}_{n=1}^{\infty}$ are Bessel and Riesz-Fischer sequences respectively in $[\overline{\text{span}}(M_{\Lambda})]_{w,A}$.
First, recall that for every $\epsilon>0$ there is $m_{\epsilon}>0$ so that $||r_{n}||_{L^2_w(A)}\le m_{\epsilon}(r_w-\epsilon)^{-\lambda_n}$.
Since $|d_n|=O(a^{\lambda_n})$ for some $a \in [0, r_w)$, then there is some $N>0$ so that $|d_n|\le N a^{\lambda_n}$ for all $n\in \mathbb{N}$.
Choose then
\[
\epsilon=\frac{r_w-a}{2}\qquad \text{hence}\qquad r_w-\epsilon=\frac{r_w+a}{2}\qquad\text{thus}\qquad
||r_{n}||_{L^2_w(A)}\le m_{\epsilon}\left(\frac{2}{r_w+a}\right)^{\lambda_n}.
\]
Since $r_w>a$, we can choose some positive $\gamma$ so that
\[
1<\gamma<\frac{(r_w+a)}{2a},\qquad \text{hence}\qquad \frac{2\gamma a}{r_w +a}<1.
\]
Since $\gamma>1$ there exists some $M>0$ so that $\lambda_n\le M\gamma^{\lambda_n}$ for all $n\in\mathbb{N}$.

Combining all of the above, shows that there is some positive number $\tau$ so that
\[
||U_{n}||_{L^2_w (A)}\le \lambda_n\cdot |d_n|\cdot ||r_{n}||_{L^2_w(A)}\le \tau\cdot \gamma^{\lambda_n}\cdot a^{\lambda_n}\cdot \left(\frac{2}{r_w+a}\right)^{\lambda_n}
\]
thus
\[
||U_{n}||_{L^2_w (A)}\le \tau\cdot \left(\frac{2a\gamma}{r_w+a}\right)^{\lambda_n}.
\]
By the Cauchy-Schwarz inequality we get
\begin{equation}\label{operatorbound}
|\langle U_{n},U_{m}\rangle_{w,A}|\le \tau^2\cdot \left(\frac{2a\gamma}{r_w+a}\right)^{\lambda_n}\cdot \left(\frac{2a\gamma}{r_w+a}\right)^{\lambda_m}.
\end{equation}
Thus
\[
\sum_{n=1}^{\infty}\sum_{m=1}^{\infty} |\langle U_{n},U_{m}\rangle_{w,A}|<\tau^2\cdot\sum_{n=1}^{\infty}\sum_{m=1}^{\infty}
 \left(\frac{2a\gamma}{r_w+a}\right)^{\lambda_n}\cdot \left(\frac{2a\gamma}{r_w+a}\right)^{\lambda_m}<\infty
\]
with convergence justified by the fact that the fraction $\frac{2a\gamma}{r_w+a}<1$.

It then follows from Lemma $\ref{besselriesz}$ that $\{U_{n}\}_{n=1}^{\infty}$  is a Bessel sequence in $[\overline{\text{span}}(M_{\Lambda})]_{w,A}$ and
its biorthogonal sequence $\{V_{n}\}_{n=1}^{\infty}$ is a Riesz-Fischer sequence in $[\overline{\text{span}}(M_{\Lambda})]_{w,A}$.\\

Therefore, the moment problem
\[
\int_{A}f(x)\cdot V_{n}(x)\cdot w(x)\, dx=c_{n} \qquad \forall\,\, n\in\mathbb{N},
\]
has a solution $f$ in $[\overline{\text{span}}(M_{\Lambda})]_{w,A}$ whenever $\sum_{n=1}^{\infty}|c_{n}|^2<\infty$.
Since $\sum_{n=1}^{\infty}1/\lambda_n<\infty$, we can take $c_{n}=1/\lambda_n$ for all $n\in\mathbb{N}$.
Hence, recalling the definition of $V_{n}$, there is some function $f\in [\overline{\text{span}}(M_{\Lambda})]_{w,A}$ so that
\[
\int_{A}f(x)\cdot \left( \frac{x^{\lambda_n}}{d_{n}\lambda_n}\right)\cdot w(x) \, dx=\frac{1}{\lambda_n}\qquad \forall\,\, n\in\mathbb{N}.
\]
Thus
\[
\int_{A}f(x)\cdot  x^{\lambda_n}\cdot w(x) \, dx=d_{n}\qquad \forall\,\, n\in\mathbb{N},
\]
hence obtaining a solution $f\in [\overline{\text{span}}(M_{\Lambda})]_{w,A}$ to the moment problem. The proof is now complete.

\appendix

\section{Proof of Theorem $\ref{spectralsynthesis}$ on a class of operators that admit Spectral Synthesis}

In order to prove Theorem $\ref{spectralsynthesis}$, we need the following result obtained by Markus.

\begin{thmc}\label{Compact}\cite[Theorem 4.1]{Markus1970}

Let $\cal{H}$ be a separable Hilbert space and let $T:\cal{H}$$\to\cal{H}$ be a compact operator such that

(i) its kernel is trivial and

(ii) its non-zero eigenvalues are $\bf simple$.

Let $\{f_n\}_{n\in\mathbb{N}}$ be the corresponding sequence of eigenvectors.
Then $T$ admits $\bf{Spectral\,\, Synthesis}$ if and only if $\{f_n\}_{n\in\mathbb{N}}$ is $\bf Hereditarily\,\, complete$ in $\cal{H}$.
\end{thmc}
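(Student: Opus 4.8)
The plan is to reduce both properties---admitting Spectral Synthesis and being Hereditarily complete---to a single family of equalities between two distinguished families of closed subspaces indexed by the subsets $\sigma\subseteq\mathbb{N}$, and then to establish the two implications separately. First I would record the standard spectral structure: since $T$ is compact with trivial kernel and simple non-zero eigenvalues $\{\mu_n\}$ (with $\mu_n\to 0$), the adjoint $T^{*}$ is compact with eigenvalues $\{\overline{\mu_n}\}$ and eigenvectors $\{g_n\}$, and the identity $(\mu_n-\mu_m)\langle f_n,g_m\rangle=0$ forces $\langle f_n,g_m\rangle=0$ for $n\neq m$; after a normalisation the families $\{f_n\}$ and $\{g_n\}$ are biorthogonal. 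Completeness of $\{f_n\}$ need not be assumed separately, since $\mathcal{H}$ is itself an invariant subspace (so Spectral Synthesis forces it) and the choice $\sigma=\mathbb{N}$ in the definition of Hereditary completeness forces it too.

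For each $\sigma\subseteq\mathbb{N}$ I would introduce the spectral subspace $E_\sigma:=\overline{\text{span}}\{f_n:n\in\sigma\}$ together with $\mathcal{M}_\sigma:=\{x\in\mathcal{H}:\langle x,g_n\rangle=0 \text{ for all } n\notin\sigma\}$. A short computation using $T^{*}g_n=\overline{\mu_n}g_n$ shows that $\mathcal{M}_\sigma$ is closed and $T$-invariant, that $E_\sigma\subseteq\mathcal{M}_\sigma$, and that the eigenvectors of $T$ lying in $\mathcal{M}_\sigma$ are exactly the scalar multiples of $\{f_n:n\in\sigma\}$. The key reformulation is that Hereditary completeness is equivalent to having $E_\sigma=\mathcal{M}_\sigma$ for every $\sigma$: indeed the orthogonal complement of the mixed system $\{f_n:n\in\sigma\}\cup\{g_n:n\notin\sigma\}$ is precisely $E_\sigma^{\perp}\cap\mathcal{M}_\sigma$, which is the orthocomplement of $E_\sigma$ inside the Hilbert space $\mathcal{M}_\sigma$, and this vanishes for all $\sigma$ exactly when $E_\sigma=\mathcal{M}_\sigma$ for all $\sigma$.

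With this dictionary in place, the implication Spectral Synthesis $\Rightarrow$ Hereditary completeness is immediate: applying synthesis to the invariant subspace $\mathcal{M}_\sigma$ and using that its eigenvectors are $\{f_n:n\in\sigma\}$ yields $\mathcal{M}_\sigma=E_\sigma$ for every $\sigma$. For the converse I would take an arbitrary closed invariant subspace $E$ and set $\sigma:=\{n:f_n\in E\}$, so that $E_\sigma\subseteq E$ trivially. The crucial point is to show $E\subseteq\mathcal{M}_\sigma$, for which I would prove that the rank-one Riesz projections $P_n x=\langle x,g_n\rangle f_n$ map $E$ into itself: if $n\notin\sigma$ and $x\in E$ then $P_nx\in E$ must be a scalar multiple of $f_n\notin E$, forcing $\langle x,g_n\rangle=0$. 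Combining $E_\sigma\subseteq E\subseteq\mathcal{M}_\sigma$ with the hypothesis $E_\sigma=\mathcal{M}_\sigma$ then squeezes $E=E_\sigma=\overline{\text{span}}\{f_n:n\in\sigma\}$, which is precisely the statement that the eigenvectors contained in $E$ are complete in $E$.

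The main obstacle is exactly the invariance $P_nE\subseteq E$. I would obtain it from the resolvent: for $|z|$ large, $(zI-T)^{-1}=z^{-1}\sum_{k\ge 0}(T/z)^k$ manifestly preserves the closed $T$-invariant subspace $E$, and since $\sigma(T)=\{0\}\cup\{\mu_n\}$ is a countable compact set it does not disconnect the plane, so the resolvent set is connected. By analytic continuation of $z\mapsto(zI-T)^{-1}x$ along the connected resolvent set we get $(zI-T)^{-1}E\subseteq E$ throughout, whence each Riesz projection $P_n=\frac{1}{2\pi i}\oint_{\gamma_n}(zI-T)^{-1}\,dz$ preserves $E$. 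The remaining care is to confirm that $P_n$ is genuinely the biorthogonal rank-one projection $x\mapsto\langle x,g_n\rangle f_n$; this follows from the simplicity of $\mu_n$ (which rules out Jordan blocks, so the Riesz projection has rank one with range $\text{span}(f_n)$) together with the fact that $P_n^{*}$ is the corresponding Riesz projection for $T^{*}$ with range $\text{span}(g_n)$, which pins down the functional as $\langle\,\cdot\,,g_n\rangle$ after normalisation.
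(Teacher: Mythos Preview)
The paper does not prove this statement at all: Theorem~C is quoted verbatim from Markus \cite[Theorem 4.1]{Markus1970} and is used as a black box in the appendix to deduce Theorem~\ref{spectralsynthesis}. There is therefore no ``paper's own proof'' against which to compare your proposal.

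That said, your sketch is a correct and standard route to Markus's result. The dictionary $E_\sigma\subseteq\mathcal{M}_\sigma$ together with the identification of the orthogonal complement of the mixed system as $E_\sigma^{\perp}\cap\mathcal{M}_\sigma$ is exactly the right reformulation, and the squeeze $E_\sigma\subseteq E\subseteq\mathcal{M}_\sigma$ is the crux of the converse. One small remark on the ``main obstacle'': your analytic-continuation argument for $(zI-T)^{-1}E\subseteq E$ works (apply any bounded functional vanishing on $E$ and use the identity theorem on the connected resolvent set), but for compact $T$ there is a cleaner route that avoids it entirely. The restriction $T|_E$ is compact, so its spectrum is $\{0\}$ together with eigenvalues of $T|_E$, and the latter are automatically eigenvalues of $T$; hence $\sigma(T|_E)\subseteq\sigma(T)$, so for every $z\in\rho(T)$ the operator $zI-T|_E$ is invertible on $E$, giving $(zI-T)^{-1}E\subseteq E$ directly. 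Either way, the Riesz projections $P_n$ then preserve $E$, and the identification $P_n x=\langle x,g_n\rangle f_n$ via $P_n^{*}$ being the Riesz projection for $T^{*}$ is correct and completes the argument.
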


We will show in Lemma $\ref{Eigenvalues}$ that
if $\{u_n\}_{n=1}^{\infty}$ is a sequence of real numbers satisfying $(\ref{un})$, then
\begin{equation}\label{operator}
Tf(x) : = \sum_{n=1}^{\infty} \langle f , r_n\rangle_{w, A}\cdot u_n\cdot  x^{\lambda_n},
\end{equation}
is an operator well defined on $[\overline{\text{span}}(M_{\Lambda})]_{w, A}$ to $[\overline{\text{span}}(M_{\Lambda})]_{w, A}$,
such that it is compact, with a trivial kernel, having non-zero simple eigenvalues $\{ u_k\}_{k=1}^{\infty}$
and with $\{ x^{\lambda_k}\}_{k=1}^{\infty}$ being the corresponding eigenvectors.
But since the system
$M_{\Lambda}$ is hereditarily complete in $[\overline{\text{span}}(M_{\Lambda})]_{w, A}$, it will then follow from Theorem $C$ that
$T$ admits Spectral Synthesis and the proof of Theorem $\ref{spectralsynthesis}$ will finish.

\begin{lemma}\label{Eigenvalues}
The following are true about $T$.
\begin{enumerate}
\item $T$ is a well defined bounded operator from $[\overline{\text{span}}(M_{\Lambda})]_{w, A}$ to
$[\overline{\text{span}}(M_{\Lambda})]_{w, A}$ and moreover $T$ is compact.
\item $\{ u_k\}_{k=1}^{\infty}$ are eigenvalues  of $T$ and $\{ e_k\}_{k=1}^{\infty}$ are the corresponding eigenvectors.
\item $\{u_k\}$ are eigenvalues  of $T^*$ (the adjoint of $T$) and $\{ r_k\}_{k=1}^{\infty}$ are the corresponding eigenvectors.
\item The kernel of $T$ is trivial.
\item The spectrum of $T$ is $\displaystyle{\{ 0\} \cup \{ u_k\}_{k=1}^{\infty}}$.
\item Each eigenvalue of $T$ is simple.
\item The operator $T$ is not normal.
\end{enumerate}
\end{lemma}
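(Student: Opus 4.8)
The plan is to verify the seven properties of the operator $T$ defined by $(\ref{operator})$ in order, leveraging the two-sided bounds $(\ref{rnkbound})$ on $\|r_n\|_{L^2_w(A)}$, the biorthogonality of $\{e_n\}$ and $\{r_n\}$, and the Fourier-type expansion $(\ref{representationf})$ from Theorem~\ref{biorthogonalsystem}. First I would establish well-definedness and compactness: for $f\in[\overline{\text{span}}(M_{\Lambda})]_{w,A}$ we have $|\langle f,r_n\rangle_{w,A}|\le \|f\|_{L^2_w(A)}\cdot\|r_n\|_{L^2_w(A)}\le m_\epsilon\|f\|(r_w-\epsilon)^{-\lambda_n}$, while $\|x^{\lambda_n}\|_{L^2_w(A)}\le M^{1/2}r_A^{\lambda_n}\cdot(\text{meas}\,A)^{1/2}$ and $|u_n|\le\rho^{\lambda_n}$ with $0<\rho<1$. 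Choosing $\epsilon$ small enough that $\rho\cdot r_A/(r_w-\epsilon)<1$ (possible since $r_w=r_A$ and $\rho<1$), the tail $\sum_{n=N}^\infty|\langle f,r_n\rangle_{w,A}|\,|u_n|\,\|x^{\lambda_n}\|_{L^2_w(A)}$ is dominated by a convergent geometric-type series uniformly in $\|f\|\le 1$, so $Tf$ converges in $L^2_w(A)$ and lies in $[\overline{\text{span}}(M_{\Lambda})]_{w,A}$; boundedness follows from the same estimate, and compactness follows because $T$ is the operator-norm limit of the finite-rank truncations $T_Nf:=\sum_{n=1}^N\langle f,r_n\rangle_{w,A}u_n x^{\lambda_n}$.

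Next I would compute the eigenstructure. Applying $T$ to $e_k=x^{\lambda_k}$ and using biorthogonality $\langle e_k,r_n\rangle_{w,A}=\delta_{kn}$ gives $Te_k=u_k e_k$, which settles item 2. For the adjoint: since $\{r_n\}$ is the biorthogonal family inside the closed span, one checks directly from the definition of the inner product that $\langle Tf,g\rangle_{w,A}=\sum_n\langle f,r_n\rangle_{w,A}u_n\langle x^{\lambda_n},g\rangle_{w,A}$, and comparing with $\langle f,T^*g\rangle_{w,A}$ identifies $T^*g=\sum_n\overline{u_n}\langle g,x^{\lambda_n}\rangle_{w,A}r_n=\sum_n u_n\langle g,x^{\lambda_n}\rangle_{w,A}r_n$ (all quantities real); then $T^*r_k=u_k r_k$, giving item 3. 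For the trivial kernel (item 4): if $Tf=0$ then, since $\{x^{\lambda_n}\}$ is minimal hence the coefficients in the expansion $(\ref{representationf})$ are uniquely determined, $\langle f,r_n\rangle_{w,A}u_n=0$ for all $n$; as each $u_n\neq 0$ we get $\langle f,r_n\rangle_{w,A}=0$ for all $n$, and then $f=0$ in $[\overline{\text{span}}(M_{\Lambda})]_{w,A}$ because $\{r_n\}$ is complete there by Theorem~\ref{biorthogonalsystem}(IV). The spectrum (item 5): $0\in\sigma(T)$ because $T$ is compact on an infinite-dimensional space; any $\lambda\notin\{0\}\cup\{u_n\}$ is resolvent since for compact $T$ every nonzero spectral value is an eigenvalue, and solving $(T-\lambda)f=g$ coefficient-wise via $\langle f,r_n\rangle_{w,A}=\langle g,r_n\rangle_{w,A}/(u_n-\lambda)$ works provided $\inf_n|u_n-\lambda|>0$, which holds since $u_n\to 0\neq\lambda$; conversely each $u_k$ is an eigenvalue by item 2. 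Simplicity (item 6): if $Tf=u_k f$ then for $n\neq k$ we get $(u_n-u_k)\langle f,r_n\rangle_{w,A}=0$, so $\langle f,r_n\rangle_{w,A}=0$ for $n\neq k$ (the $u_n$ being distinct), whence $f$ is a scalar multiple of $x^{\lambda_k}$ by the expansion $(\ref{representationf})$; thus $\dim\ker(T-u_k)=1$. Finally item 7: $T$ is not normal because its eigenvectors $\{x^{\lambda_k}\}$ corresponding to distinct eigenvalues are not mutually orthogonal (e.g. $\langle x^{\lambda_1},x^{\lambda_2}\rangle_{w,A}=\int_A x^{\lambda_1+\lambda_2}w(x)\,dx>0$ since $w>0$ a.e.\ on a set of positive measure), whereas a normal compact operator has an orthogonal eigenbasis.

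With Lemma~\ref{Eigenvalues} in hand, Theorem~\ref{spectralsynthesis} follows immediately: $T$ is compact with trivial kernel and simple nonzero eigenvalues whose eigenvectors are $\{x^{\lambda_k}\}=M_\Lambda$, which by Theorem~\ref{hereditarycompleteness} is hereditarily complete in $[\overline{\text{span}}(M_{\Lambda})]_{w,A}$; hence by Markus's criterion (Theorem~C of the appendix) $T$ admits spectral synthesis, and $T$ is not normal by item 7. For the special case $T_\rho f(x)=f(\rho x)$ with $0<\rho<1$, one observes that $T_\rho x^{\lambda_n}=\rho^{\lambda_n}x^{\lambda_n}$, so $T_\rho$ agrees with $T$ for the choice $u_n=\rho^{\lambda_n}$ (which satisfies $(\ref{un})$ and has distinct nonzero entries) on $\text{span}(M_\Lambda)$, and both are bounded, hence they coincide on the closed span.

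The main obstacle I anticipate is the careful choice of $\epsilon$ in the compactness/well-definedness step: one must exploit that $r_w=r_A$ (guaranteed by $m\le w\le M$ via Remark~\ref{equalr}) and that the support-radius governing $\|x^{\lambda_n}\|_{L^2_w(A)}$ is also $r_A$, so that the product $\rho\cdot r_A\cdot(r_w-\epsilon)^{-1}$ can be driven below $1$; without the assumptions of Theorem~\ref{hereditarycompleteness} this balancing would fail, which is precisely why the hypotheses are inherited. The rest is bookkeeping with biorthogonality and the uniqueness of the coefficients in $(\ref{representationf})$. (Since the proof is, in the author's words, almost identical to that of \cite[Theorem 4.1]{Zikkos2024JMAA}, one may alternatively cite that argument verbatim with the substitution $e^{\lambda_n t}\mapsto x^{\lambda_n}$ and $L^2(a,b)\mapsto L^2_w(A)$.)
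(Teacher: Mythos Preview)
Your proposal is correct and follows essentially the same approach as the paper's proof: the same $\epsilon$-balancing for well-definedness and compactness via finite-rank truncations, the same biorthogonality computations for items 2--6, and the same non-orthogonality argument for item 7. The only cosmetic difference is that you exhibit $T^*$ explicitly before evaluating at $r_k$, whereas the paper verifies $T^*r_k=u_kr_k$ directly by checking $\langle T^*r_k-u_kr_k,\,e_n\rangle_{w,A}=0$ for all $n$; both routes are equally valid.
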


\begin{proof}

$\quad$

1. Let $f$ be a function in $[\overline{\text{span}}(M_{\Lambda})]_{w, A}$. Then $f$ extends analytically in the slit disk $D_{r_w}$
\[
f(z)=\sum_{n=1}^{\infty} \langle f , r_n\rangle_{w, A}\cdot  z^{\lambda_n},
\]
with the series converging uniformly on compact subsets of $D_{r_w}$.
From relation $(\ref{rnkbound})$, for every $\epsilon>0$ there exists some $m_{\epsilon}>0$, independent of $n\in\mathbb{N}$, so that
\begin{equation}\label{aaa1}
|\langle f , r_n\rangle_{w, A}|\le ||f||_{L^2_w (A)}\cdot ||r_n||_{L^2_w (A)}
\le ||f||_{L^2_w (A)}\cdot m_{\epsilon}(r_w-\epsilon)^{-\lambda_n}.
\end{equation}
Since $\{u_n\}_{n=1}^{\infty}$ satisfies $(\ref{un})$, then
for some $\rho\in (0, 1)$ one has $|u_n|\le \rho^{\lambda_n}$, hence $u_n\to 0$ as $n\to\infty$.

Choose
\begin{equation}\label{epsilonn}
\epsilon=\frac{r_w(1-\rho)}{2},\qquad \text{thus}\qquad r_w-\epsilon=\frac{r_w(1+\rho)}{2}.
\end{equation}
Combining $(\ref{aaa1})-(\ref{epsilonn})$ yields that there exists some $m_{\epsilon}>0$, independent of $n\in\mathbb{N}$ and $f\in L^2_w (A)$,
so that

\begin{equation}\label{upperbound}
|\langle f , r_n\rangle_{w, A}\cdot u_n|\le ||f||_{L^2_w (A)}\cdot m_{\epsilon}\cdot \left(\frac{2\rho}{r_w(1+\rho)}\right)^{\lambda_n}.
\end{equation}

One deduces from $(\ref{upperbound})$ that $Tf(z)$
is a function analytic on the slit disk
\[
\left\{z\in\mathbb{C}\setminus (-\infty, 0]: \,\, |z|<r_w\cdot \left(\frac{1+\rho}{2\rho}\right)\right\}
\]
converging uniformly on its compact subsets, and in particular on the interval $[0,r_w]$.
The uniform convergence on $[0,r_w]$ and the inequality $m\le w(x)\le M$ on the set $A$, imply that

$(i)$ $Tf(z)$ belongs to the space $[\overline{\text{span}}(M_{\Lambda})]_{w, A}$.

$(ii)$ The series $T(f)$ converges in the $L^2_w (A)$ norm.

It also follows from $(\ref{upperbound})$ and $(\ref{operator})$ that there exists some $N>0$ so that
\[
||T(f)||_{L^2_w (A)}\le N||f||_{L^2_w (A)}\quad \text{for\,\, all}\quad f\in [\overline{\text{span}}(M_{\Lambda})]_{w, A}.
\]
Therefore, $T: [\overline{\text{span}}(M_{\Lambda})]_{w, A}\to [\overline{\text{span}}(M_{\Lambda})]_{w, A}$ defines a Bounded Linear Operator.
We will use $\| T \|$ to
denote the operator norm of $T$ which is the supremum of the set
\[
\{ \| T(f)\|_{L^2_w (A)}: f\in [\overline{\text{span}}(M_{\Lambda})]_{w, A}, \| f\|_{L^2_w (A)}=1 \}.
\]
We also denote by $T^*$ the Adjoint operator of $T$.\\

\smallskip

Next we show that $T$ is compact.
Let $e_n(x)=x^{\lambda_n}$.
Define  $T_m$ on $[\overline{\text{span}}(M_{\Lambda})]_{w, A}$ by
\[
T_m(g)(x)=\sum_{n=1}^m \langle g, r_n \rangle_{w, A} u_n e_n(x).
\]
Let $f$ be a unit vector in $[\overline{\text{span}}(M_{\Lambda})]_{w, A}$. It is easy to see that
\[
\|(T-T_m)(f)\|_{L^2_w (A)}\leq \sum_{n=m+1}^\infty \| \langle f, r_n \rangle_{w, A} u_n e_n  \|_{L^2_w (A)}.
\]

It is easy to see that there is some $q>0$ so that one has $\| e_n \|_{L^2_w (A)}<q r_w^{\lambda_n}$. Combined with $(\ref{upperbound})$ and
with $\epsilon$ as in $(\ref{epsilonn})$ gives

\[
\| \langle f, r_n \rangle_{w, A} u_n e_n  \|_{L^2_w (A)}\le m_\epsilon ||f||_{L^2_w (A)}\left( \frac{2\rho}{1+\rho}\right)^{\lambda_n}.
\]

Therefore,
\[
\|(T-T_m)(f)\|_{L^2_w (A)}\le m_\epsilon ||f||_{L^2_w (A)} \sum_{n=m+1}^\infty \left(\frac{2\rho}{1+\rho}\right)^{\lambda_n}.
\]
Since $2\rho<1+\rho$ the series above converges. Hence, $\|T-T_m\|_{L^2_w (A)}$ tends to zero as $m$ tends to infinity.
Thus, the finite rank operators $\{ T_m\}$ converge to $T$ in the uniform operator topology. Therefore,
$T$ is compact.

2. The families $M_{\Lambda}$ and $r_{\Lambda}$ are biorthogonal thus from $(\ref{operator})$ we get
\[
T(e_k)=u_k e_k.
\]

3. For fixed $k\in\mathbb{N}$ and any $n\in\mathbb{N}$ we have
\begin{eqnarray}
\langle T^*r_k- u_k r_k, e_n\rangle_{w, A} & = & - u_k\langle r_k, e_n\rangle_{w, A} +\langle r_k, Te_n\rangle_{w, A}\nonumber\\
& = & - u_k\langle r_k, e_n\rangle_{w, A} + \langle r_k, u_n e_n\rangle_{w, A}\nonumber\\
& = & - u_k\langle r_k, e_n\rangle_{w, A} + u_n \langle r_k, e_n\rangle_{w, A}. \label{zero}
\end{eqnarray}
For $n=k$ $(\ref{zero})$ equals zero, and the same holds for all $n\not= k$
due biorthogonality.
Therefore $\langle T^*r_k - u_k r_k, e_n\rangle_{w, A}=0$ for all $n\in\mathbb{N}$, hence
\[
T^*r_k - u_k\cdot r_k=\mathbf{0}.
\]

4. For any $n\in\mathbb{N}$ we have
\[
\langle Tf, r_n\rangle_{w, A} =\langle f, T^*r_n\rangle_{w, A} = \langle f, u_n r_n\rangle_{w, A}.
\]
If $Tf=0$ then $\langle f, r_n\rangle_{w, A} =0$ for all $n\in\mathbb{N}$. The completeness of the family $\{r_n\}_{n=1}^{\infty}$
in the space $[\overline{\text{span}}(M_{\Lambda})]_{w, A}$ means that $f=\mathbf{0}$. Hence the kernel of $T$ is the zero function.\\

\smallskip

5. Suppose now that $Tf=\lambda f$ for some $\lambda\notin\{u_n\}_{n=1}^{\infty}$ and $f\not=\mathbf{0}$. Then,
\begin{align}
\lambda\langle f, r_n\rangle_{w, A} & =  \langle Tf, r_n\rangle_{w, A}, \notag \\
& =\langle f, T^*r_n\rangle_{w, A} \notag, \\
&= \langle f, u_n r_n\rangle_{w, A}, \notag \\
&=u_n\langle f, r_n\rangle_{w, A}.\label{Only_eigenvalues}
\end{align}
Thus,
\[
(\lambda-u_n)\cdot \langle f, r_n\rangle_{w, A}=0 \quad \text{for all}\quad  n\in\mathbb{N}.
\]
Since $\lambda\notin\{u_n\}_{n=1}^{\infty}$ then $\langle f, r_n\rangle_{w, A}=0$ for all $n\in\mathbb{N}$,
and the completeness of the family $\{r_n\}_{n=1}^{\infty}$ in $[\overline{\text{span}}(M_{\Lambda})]_{w, A}$
means that $f=\mathbf{0}$. We conclude that the $u_k$'s are the only non-zero eigenvalues of $T$ and form its
compactness it follows that the spectrum of $T$ is
\[
\displaystyle{\{ 0\} \cup \{ u_k\}_{k=1}^{\infty}}.
\]

6. Suppose now that $Tf=u_k f$ for some $u_k\in\{u_n\}_{n=1}^{\infty}$ and some $f\in [\overline{\text{span}}(M_{\Lambda})]_{w, A}$.
Then, using the same computation as in (\ref{Only_eigenvalues}) above, we get

\[
(u_k-u_n)\langle f , r_n\rangle_{w, A} = 0 \quad \text{for all}\quad n\in\mathbb{N}.
\]
But  $u_n\not= u_k$ if $n\not= k$, thus $\langle f , r_n\rangle_{w, A} = 0$ for all $n\not= k$. Since
\[
f(x)=\sum_{n=1}^{\infty} \langle f , r_n\rangle_{w, A}\cdot  e_n(x),
\]
we get $f(t)=\langle f , r_k\rangle_{w, A} e_k$,  meaning that every $u_k$ is simple.\\

\smallskip

7. Finally, suppose that $T$ is a normal operator, thus
\[
TT^*(e_k)=T^*T(e_k)\qquad \text{for\,\,all}\quad k\in\mathbb{N}.
\]
Any two eigenvectors that correspond to different eigenvalues of a normal operator are orthogonal.
Clearly the set of eigenvectors $e_n=x^{\lambda_n}$ of $T$ is not orthogonal, therefore $T$ is not normal.

This concludes the proof of Lemma $\ref{Eigenvalues}$.
\end{proof}

\end{document}